\def\bR{{\mathbb R}}
\def\sE{{\mathscr E}}
\def\sF{{\mathscr F}}
\def\sG{{\mathscr G}}
\def\re{{\mathrm{e}}}
\def\cH{\mathcal{H}}
\def\sL{\mathscr{L}}
\def\cD{\mathcal{D}}
\def\cH{\mathcal{H}}
\def\bS{\mathbf{S}}
\def\bs{\mathbf{s}}
\def\${|\!|\!|}
\def\l|{\left|\!\left|\!\left|}
\def\r|{\right|\!\right|\!\right|}
\newtheorem{theorem}{Theorem}[section]
\newtheorem{lemma}[theorem]{Lemma}
\newtheorem{corollary}[theorem]{Corollary}
\theoremstyle{definition}
\newtheorem{definition}[theorem]{Definition}
\theoremstyle{remark}
\newtheorem{remark}[theorem]{Remark}
\numberwithin{equation}{section}
\begin{document}

\title[On structure of harmonic functions]{On structure of $L^2$-harmonic functions for one-dimensional diffusions}

\author{Liping Li}
\address{Fudan University, Shanghai, China.  }
\address{Bielefeld University,  Bielefeld, Germany.}
\email{liliping@amss.ac.cn}
\thanks{The author is partially supported by NSFC (No.  11931004) and Alexander von Humboldt Foundation in Germany.}


\subjclass[2010]{Primary 31C25, 60J60.}



\keywords{Dirichlet forms,  Harmonic functions,  Diffusions,  Generators}

\begin{abstract}
In this note we analyse the harmonic functions in $L^2$-sense for an irreducible diffusion on an interval. 
\end{abstract}

\maketitle

\tableofcontents

\section{Dirichlet forms associated with one-dimensional diffusions}

Let $I:=\left\langle l,r\right\rangle$ be an interval where $l$ or $r$ may or may not be in $I$ and $m$ be a positive Radon measure on $I$ with full topological support.  The notation $m(l+)<\infty$ (resp.  $m(r-)<\infty$) means that for some $\varepsilon>0$,  $m\left((l,l+\varepsilon)\right)<\infty$ (resp.  $m\left((r-\varepsilon, r)\right)<\infty$).  Otherwise write $m(l+)=\infty$ (resp.  $m(r-)=\infty$).  When $l=-\infty$ (resp.  $r=\infty$),  $l+\varepsilon$ (resp.  $r-\varepsilon$) in this meaning is replaced by some constant in $(l,r)$.  Clearly,  if $l\in I$ (resp.  $r\in I$),  then $m(l+)<\infty$ (resp.  $m(r-)<\infty$).  
 Fix a point $e\in \mathring I:=(l,r)$ and without loss of generality assume that $m(\{e\})=0$.   Denote by
\[
\bS(\mathring I):=\{\bs:\mathring I\rightarrow \bR: \bs\text{ is continuous and strictly increasing}, \; \bs(e)=0\}
\]
the family of all scale functions on $\mathring I$. 
Define
\[
	\bs(l):=\lim_{x\downarrow l}\bs(x)\geq -\infty,\quad \bs(r):=\lim_{x\uparrow r}\bs(x)\leq \infty.  
\]
Set $\bar{I}:=[l, r]$ to be the interval containing the boundary points even if $l$ or $r$ is infinite.  If a function $f$ is undefined in $l$ or $r$,  we understand $f(l)$ or $f(r)$ the limit $\lim_{x\rightarrow l\text{ or }r}f(x)$.  
 Take $\bs\in \bS(\mathring{I})$ and set for $x\in \bar{I}$,  
\[
	\sigma(x):=\int_e^x\int_e^\xi m(d\eta)d\bs(\xi),\quad \mu(x):=\int_e^x \int_e^\xi d\bs(\eta)m(d\xi).
\]
Here (and hereafter) $\int_e^\xi m(d\eta)$ means $\int_{(e,\xi]} m(d\eta)$ for $\xi>e$ and $-\int_{(\xi, e]}m(d\eta)$ for $\xi<e$,  and analogical meaning holds for $\int_e^x m(d\xi)$.  
The following classification of the boundary point is well known; see,  e.g.,  \cite[\S5.12]{I06}.  

\begin{definition}
Given $\bs\in \bS(\mathring I)$,  $l$ (resp.  $r$) is called (with respect to $(\bs,m)$)
\begin{itemize}
\item[(1)] \emph{regular},  if $\sigma(l),\mu(l)<\infty$ (resp.  $\sigma(r), \mu(r)<\infty)$);
\item[(2)] \emph{exit},  if $\sigma(l)<\infty, \mu(l)=\infty$ (resp. $\sigma(r)<\infty, \mu(r)=\infty$);
\item[(3)] \emph{entrance},  if $\sigma(l)=\infty,  \mu(l)<\infty$ (resp.  $\sigma(r)=\infty,  \mu(r)<\infty$);
\item[(4)]  \emph{natural},  if $\sigma(l)=\mu(l)=\infty$ (resp. $\sigma(r)=\mu(r)=\infty$).  
\end{itemize}
In addition,  $l$ (resp.  $r$) is called \emph{absorbing},  if $l$ (resp.  $r$) is regular and $l\notin I$ (resp.  $r\notin I$).  It is called \emph{reflecting},  if  it is regular and contained in $I$. 
\end{definition}
\begin{remark}
Note that $r$ is regular,  if and only if $\bs(r)<\infty$ and $m(r-)<\infty$.  If $r$ is exit,  then $\bs(r)<\infty$ and $m(r-)=\infty$.  If $r$ is entrance,  then $\bs(r)=\infty$ and $m(r-)<\infty$.  If $r$ is natural,  then at least one of $m(r-)$ and $\bs(r)$ must be $\infty$. 
\end{remark}

The endpoint $l$ or $r$ is called \emph{approachable} if $\bs(l)>-\infty$ or $\bs(r)<\infty$.  
Given a function $f$ on $I$,  $f\ll d\bs$ means that $f$ is absolutely continuous with respect to $d\bs$,  i.e.  there exists an absolutely continuous function $g$ on $\bs(I)=\{\bs(x): x\in I\}$ such that $f=g\circ \bs$.  Meanwhile $df/d\bs:=g'\circ \bs$.  Note that if $l$ or $r$ is approachable,  then any function $f$ with $f\ll d\bs$ and $df/d\bs\in L^2(I,d\bs)$ admits the finite limit $f(l):=\lim_{x\downarrow l}f(x)$ or $f(r):=\lim_{x\uparrow r} f(x)$.  Particularly  $f\in C((l,r])$ or $f\in C([l,r))$;  see \cite[\S2.2.3]{CF12}.  

What we are concerned with is a regular and irreducible Dirichlet form $(\sE,\sF)$ on $L^2(I,m)$.  To state its representation in the following lemma,  denote by $\tilde{\bS}(\mathring I)$ the family of all scale functions $\bs\in \bS(\mathring I)$ satisfying the condition: For $j=l$ or $r$,  if $j\in I$ and $m(\{j\})>0$,  then $|\bs(j)|<\infty$.  

\begin{lemma}
Let $I,m$ be given as above.  Then $(\sE,\sF)$ is a regular,  irreducible and strongly local Dirichlet form on $L^2(I,m)$,  if and only if  there exists a unique scale function $\bs\in \tilde\bS(\mathring{I})$ such that 
\begin{equation}\label{eq:12}
\begin{aligned}
	&\sF=\{f\in L^2(I,m): f\ll d\bs,  df/d\bs\in L^2(I,d\bs), \\
	&\qquad\qquad \qquad \qquad f(j)=0\text{ if }j \text{ is absorbing for }j=l\text{ or }r\},\\
	&\sE(f,g)=\frac{1}{2}\int_I \frac{df}{d\bs}\frac{dg}{d\bs}d\bs,\quad f,g\in \sF.  
\end{aligned}
\end{equation}
\end{lemma}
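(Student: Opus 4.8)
The plan is to prove the two implications separately, treating the ``if'' direction as a direct verification of the Dirichlet-form axioms and the ``only if'' direction as a structural, energy-measure argument that produces the scale function. For the sufficiency, fix $\bs\in\tilde\bS(\mathring I)$ and let $(\sE,\sF)$ be defined by \eqref{eq:12}. First I would establish closedness: if $(f_n)$ is Cauchy in the norm $\sE_1:=\sE+\|\cdot\|^2_{L^2(I,m)}$, then $f_n\to f$ in $L^2(I,m)$ and $df_n/d\bs\to h$ in $L^2(I,d\bs)$, and writing $f_n(x)-f_n(y)=\int_y^x (df_n/d\bs)\,d\bs$ one passes to the limit locally uniformly to get $f\ll d\bs$ with $df/d\bs=h$; here the condition $\bs\in\tilde\bS(\mathring I)$ guarantees that the limits at mass-carrying boundary points are finite, so that the absorbing boundary condition $f(j)=0$ is preserved under $\sE_1$-limits. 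The Markovian property is then the chain rule for $d\bs$-absolutely continuous functions: for any normal contraction $T$ one has $Tf\ll d\bs$, $|d(Tf)/d\bs|\le|df/d\bs|$ and $T0=0$, whence $Tf\in\sF$ and $\sE(Tf,Tf)\le\sE(f,f)$.

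For regularity I would show that $\sF\cap C_c(I)$ is both uniformly dense in $C_c(I)$ and $\sE_1$-dense in $\sF$; the first follows from Stone--Weierstrass applied to compactly supported smooth functions of $\bs$, and the second by truncating the support and mollifying in the scale variable. Strong locality is immediate from \eqref{eq:12}, since $dg/d\bs=0$ wherever $g$ is locally constant, so $\sE(f,g)=0$ whenever $g$ is constant on a neighbourhood of $\mathrm{supp}(f)$. Irreducibility uses the connectedness of $\mathring I$: if $A$ is an $\sE$-invariant set, then $\mathbf 1_A$ is $d\bs$-locally constant, and since $\bs$ maps $\mathring I$ homeomorphically onto an interval this forces $m(A)=0$ or $m(I\setminus A)=0$.

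For the necessity, let $(\sE,\sF)$ be a regular, irreducible, strongly local Dirichlet form. The energy-measure representation for strongly local forms (the Beurling--Deny decomposition having no jumping or killing part) gives $\sE(f,g)=\tfrac12\,\mu_{\langle f,g\rangle}(I)$ for $f,g\in\sF\cap C_c(I)$, where $\mu_{\langle f,g\rangle}$ denotes the energy measure. The crucial one-dimensional step is to produce a single reference function: using the Leibniz and chain rules for energy measures together with irreducibility, I would exhibit a strictly increasing $\sE$-harmonic function on $\mathring I$, normalise it by subtracting its value at $e$ to obtain $\bs\in\bS(\mathring I)$, and then check that the energy measure of every $f\in\sF$ is $d\bs$-absolutely continuous with $d\mu_{\langle f\rangle}=(df/d\bs)^2\,d\bs$, which yields \eqref{eq:12}. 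Equivalently one may invoke the associated $m$-symmetric diffusion, whose Feller scale and speed data are precisely $\bs$ and $m$. Along the way the boundary analysis --- matching absorbing and reflecting endpoints to the condition $f(j)=0$ and verifying $\bs\in\tilde\bS(\mathring I)$ --- must be carried out using the classification recalled in the Definition.

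Uniqueness is then short: if $\bs_1,\bs_2\in\tilde\bS(\mathring I)$ both give \eqref{eq:12}, they share the same $\sE$-harmonic functions, so $\bs_2$ is affine in $\bs_1$; the normalisation $\bs_i(e)=0$ fixes the additive constant, and simultaneous matching of the function space $L^2(I,d\bs_i)$ and of the energy fixes the multiplicative one, forcing $\bs_1=\bs_2$. I expect the main obstacle to be the necessity direction --- concretely, establishing that the abstract local energy form is generated by one scale function, i.e.\ the identity $d\mu_{\langle f\rangle}=(df/d\bs)^2\,d\bs$, which is where the carr\'e-du-champ calculus and the connectedness supplied by irreducibility do the real work; the delicate boundary bookkeeping that distinguishes $\bS(\mathring I)$ from $\tilde\bS(\mathring I)$ is the secondary difficulty.
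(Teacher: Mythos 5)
Your proposal takes a genuinely different route from the paper, but it has a real gap at its center. The paper does not verify the Dirichlet-form axioms or reconstruct the scale function from energy measures at all: it invokes the representation theorem \cite[Theorem~2.1]{LY19}, which already asserts that every regular, strongly local Dirichlet form on $L^2(I,m)$ is determined by a unique effective interval $J$ and an adapted scale function, and the paper's entire proof is the irreducibility bookkeeping on top of that citation ($J$ must reach $l$ and $r$, since otherwise $I\setminus J$ is a non-trivial $m$-invariant set; an endpoint $j\in I$ with $m(\{j\})>0$ forces $|\bs(j)|<\infty$, since otherwise adaptedness puts $j\notin J$ and $\{j\}$ is invariant; conversely $\bs\in\tilde\bS(\mathring I)$ gives $m(I\setminus J)=0$, whence irreducibility). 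Your proposal in effect undertakes to reprove \cite[Theorem~2.1]{LY19} from scratch. That is legitimate in principle, and your sufficiency sketch (closedness, normal contractions, Stone--Weierstrass plus truncation for regularity, locality) is standard and executable. But the decisive step of your necessity direction --- exhibiting a strictly increasing function $\bs$ with $d\mu_{\langle f\rangle}=(df/d\bs)^2\,d\bs$ for every $f\in\sF$, together with the boundary matching that decides which endpoints receive the condition $f(j)=0$ --- is only announced (``I would exhibit\dots''), and you yourself flag it as the main obstacle. As written, the proposal contains no argument for it; since this identity is precisely the content of the cited theorem, the core of the necessity is deferred rather than proved.

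There is a second, more localized gap, and it sits exactly at the point that distinguishes this lemma from the bare representation theorem: your irreducibility argument in the sufficiency direction. Connectedness of $\mathring I$, together with $\mathbf{1}_A$ being locally $d\bs$-constant, only rules out interior invariant sets; it does not exclude $A=\{r\}$ when $r\in I$ carries an atom $m(\{r\})>0$. That case is excluded precisely by the defining condition of $\tilde\bS(\mathring I)$: if $\bs(r)<\infty$, then every $f\in\sF$ is $g\circ\bs$ with $g$ absolutely continuous on $\bs(I)\ni\bs(r)$, so the value of $f$ at the atom is its limit from inside, and $\mathbf{1}_{\{r\}}f\in\sF$ would force $f(r)=0$; hence $\{r\}$ is not invariant. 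If instead $\bs(r)=\infty$ were permitted, \eqref{eq:12} would still define a strongly local Dirichlet form, but a reducible one, with $\{r\}$ a non-trivial invariant set. You invoke the $\tilde\bS$ condition only in the closedness step (``limits at mass-carrying boundary points are finite''), which is not where it earns its keep. To turn the proposal into a proof you must either carry out the energy-measure reconstruction in full, or, as the paper does, quote \cite[Theorem~2.1]{LY19} and then supply exactly this atom-at-the-endpoint analysis; your uniqueness argument (affine relation between scales, normalisation at $e$, matching of the energy fixing the multiplicative constant) is correct as sketched.
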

\begin{proof}
\emph{Necessity}.  Let $(\sE,\sF)$ be such a Dirichlet form.  Applying \cite[Theorem~2.1]{LY19} and the irreducibility,  we can obtain a unique interval $J\subset I$,  called effective interval,  and a unique adapted scale function on $J$ in the sense of \cite[(2.2)]{LY19} representing $(\sE,\sF)$.  Note that $J$ must be ended by $l$ and $r$,  because otherwise $I\setminus J$ would become a non-trivial $m$-invariant set (with respect to $(\sE,\sF)$) as violates the irreducibility.  Denote by the adapted scale function on $J$ by $\bs$.  Then $\bs\in \bS(\mathring I)$.  To show $\bs\in \tilde{\bS}(\mathring I)$,  argue by contradiction and suppose that $r\in I$,  $m(\{r\})>0$ and $\bs(r)=\infty$.  The adaptedness of $\bs$ indicates that $r\notin J$.  Hence $\{r\}$ is a non-trivial $m$-invariant set,  as leads to a contradiction.  Therefore $\bs\in \tilde{\bS}(\mathring{I})$ and the expression of $(\sE,\sF)$ can be obtained by \cite[Theorem~2.1]{LY19}. 

\emph{Sufficiency}.  Let $J:=\left\langle l, r\right \rangle$,  where $j\in J$ if and only if $j$ is reflecting with respect to $(\bs, m)$ for $j=l$ or $r$.  Then $J\subset I$ and $\bs$ is adapted to $J$.  By \cite[Theorem~2.1]{LY19},  \eqref{eq:12} gives a regular and strongly local Dirichlet form on $L^2(I,m)$ with the effective interval $J$ and adapted scale function $\bs$.  The condition $\bs\in \tilde{\bS}(\mathring {I})$ implies that $m(I\setminus J)=0$ and hence $(\sE,\sF)$ is irreducible.  That completes the proof. 
\end{proof}
\begin{remark}
The associated Markov process $X=(X_t)_{t\geq 0}$ of $(\sE,\sF)$ is a diffusion process on $I$ with no killing inside  whose scale function is $\bs$ and speed measure is $m$; see,  e.g.,  \cite[V\S7]{RW87}.
\end{remark}



From now on we denote by $I_e$ the effective interval of $(\sE,\sF)$ as described in \cite[\S2.3]{LY19}.  More precisely,  $\mathring I\subset I_e\subset I$,  and $j\in I_e$ if and only if $j$ is reflecting for $j=l$ or $r$.  In addition,  $j\in I\setminus I_e$ implies that $|\bs(j)|=\infty$ and $m(\{j\})=0$.  Any singleton contained in $I_e$ is of positive capacity and $I\setminus I_e$ is $\sE$-polar.  Particularly,  $\sF\subset C(I_e)$.  The family
\begin{equation}\label{eq:11}
\mathscr C_{I_e}:=\{\varphi\circ \bs: \varphi\in C_c^\infty(\bs(I_e))\}
\end{equation}
is a special standard core of $(\sE,\sF)$.  

\subsection*{Notations}
Given an interval $J$,  $C(J)$, $pC(J)$ and $C_c^\infty(J)$ stand for the families of all continuous functions,  all non-negative continuous functions and all smooth functions with compact support on $J$ respectively. 

\section{Solutions of harmonic equation}

Given a constant $\alpha> 0$,  consider the following equation
\begin{equation}\label{eq:21}
	\frac{1}{2}\frac{d}{dm}\frac{du}{d\bs}(x)=\alpha u(x),\quad x\in \mathring{I}.  
\end{equation}
A solution of \eqref{eq:21} means $u\in L^1_\text{loc}(\mathring{I},m)$ such that $u\ll d\bs$ and a $d\bs$-a.e. version $v$ of $du/d\bs$ satisfies 
\begin{equation}\label{eq:22}
	2\alpha\int_{(x,y]} u(\xi)m(d\xi)=v(y)-v(x),\quad \forall x,y\in \mathring{I},  x<y.  
\end{equation}
Note that \eqref{eq:22} indicates that $v$ is right continuous and $v(r):=\lim_{x\uparrow r}v(x)$ (resp.  $v(l):=\lim_{x\downarrow l}v(x)$) is well defined if $u|_{[e,r)}\in L^1([e,r),m)$ (resp.  $u|_{(l,e]}\in L^1((l,e],m)$).

This section is devoted to presenting two particular solutions of \eqref{eq:21} as obtained in \cite[Chapter II]{M68}; see also \cite[\S5.12]{I06}.  
Set,  for $n=0,1,2,\cdots,  x\in \mathring{I}$,  
\[
 	u^0(x)\equiv 1,\quad u^1(x)=\sigma(x),\quad  u^{n+1}(x)=\int_e^x\int_e^y u^n(\xi)m(d\xi)d\bs(y). 
\]
Denote
\[
	u(x)=\sum_{n=0}^\infty \alpha^n u^n(x),\quad x\in \mathring I,
\]
and introduce the functions 
\[
\begin{aligned}
	u_+(x)&=u(x)\int_x^r u(y)^{-2}d\bs(y),\quad x\in \mathring I\\
	u_-(x)&=u(x)\int_l^x u(y)^{-2}d\bs(y),\quad x\in \mathring I.  
\end{aligned}
\]
The following lemma due to \cite{M68} is crucial to our treatment. 

\begin{lemma}\label{LM21}
\begin{itemize}
\item[(1)] $1+\sigma(x)\alpha\leq u(x)\leq \exp\{\alpha \sigma(x)\}$.   For $x>e$ (resp. $x<e$),
\[
\begin{aligned}
\int_x^r u(y)^{-2}d\bs(y)&\leq \frac{1}{\alpha (1+\alpha \sigma(x))m((e,x])},\quad \\
& \left(\text{resp. }  \int_l^x u(y)^{-2}d\bs(y)\leq \frac{1}{\alpha (1+\alpha \sigma(x))m((x,e])} \right). 
\end{aligned}\]
\item[(2)] $u,  u_\pm \in pC(\mathring I)$ are solutions of \eqref{eq:21}. 
\item[(3)] $u_+$ is decreasing.  If $r$ is not entrance,  then $u_+(r)=0$.  $du_+/d\bs$ is increasing and if $r$ is entrance or natural,  then $du_+/d\bs(r)=0$.  
\item[(4)] $u_-$ is increasing in $x$.  If $l$ is not entrance,  then $u_-(l)=0$.  $du_-/d\bs$ is increasing in $x$ and if $l$ is entrance or natural,  then $du_-/d\bs(l)=0$.  
\end{itemize}
\end{lemma}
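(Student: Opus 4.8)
\emph{Part (1).} Everything hinges on the recursion $\frac{d}{dm}\frac{du^{n+1}}{d\bs}=u^{n}$ with $u^{n+1}(e)=\frac{du^{n+1}}{d\bs}(e)=0$ (the vanishing at $e$ uses $m(\{e\})=0$), read off directly from the definition of $u^{n}$. Since each $u^{n}\ge0$ on $\mathring I$ (a sign check shows the iterated integrals are nonnegative on both sides of $e$), the lower bound $u\ge u^{0}+\alpha u^{1}=1+\alpha\sigma$ is immediate. For the upper bound I would prove $u^{n}\le \sigma^{n}/n!$ by induction: inserting the induction hypothesis into the definition of $u^{n+1}$ and comparing $d/d\bs$-derivatives, monotonicity of $\sigma$ gives $\frac{d}{d\bs}$ of the majorant equal to $\frac{\sigma^{n}}{n!}\,m((e,x])=\frac{d}{d\bs}\frac{\sigma^{n+1}}{(n+1)!}$, so that $u^{n+1}\le \sigma^{n+1}/(n+1)!$; summing yields $u\le e^{\alpha\sigma}$ and, in particular, local uniform convergence of the series. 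For the tail bound I would combine $u\ge 1+\alpha\sigma$ with the identity $\frac{d}{d\bs}(\frac{-1}{1+\alpha\sigma})=\frac{\alpha\,m((e,y])}{(1+\alpha\sigma)^{2}}$; since $m((e,y])\ge m((e,x])$ for $y\ge x>e$, integrating gives $\int_x^r u^{-2}\,d\bs\le\int_x^r(1+\alpha\sigma)^{-2}\,d\bs\le \frac{1}{\alpha\,m((e,x])\,(1+\alpha\sigma(x))}$, and the case $x<e$ is symmetric.

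\emph{Part (2).} Differentiating $u=\sum_{n}\alpha^{n}u^{n}$ term by term (justified by the local uniform convergence above) and inserting the recursion shows that $u$ satisfies \eqref{eq:22}, hence solves \eqref{eq:21}; as $u>0$ is continuous, $u\in pC(\mathring I)$. For $u_{\pm}$ I would use reduction of order. Setting $J(x):=\int_x^r u^{-2}\,d\bs$ (finite by Part (1)), one computes $\frac{du_+}{d\bs}=\frac{du}{d\bs}\,J-\frac1u$, and then, from $d(\frac{du}{d\bs})=2\alpha u\,dm$ and $dJ=-u^{-2}\,d\bs$, that $d(\frac{du_+}{d\bs})=2\alpha u J\,dm=2\alpha u_+\,dm$; thus $u_+$ solves \eqref{eq:21} (equivalently, the Wronskian $u\frac{du_+}{d\bs}-u_+\frac{du}{d\bs}\equiv-1$). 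Since $u_+>0$ is continuous, $u_+\in pC(\mathring I)$; $u_-$ is treated identically.

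\emph{Parts (3)--(4).} By the $l\leftrightarrow r$ symmetry I would treat only $u_+$. That $\frac{du_+}{d\bs}$ is increasing follows at once from $d(\frac{du_+}{d\bs})=2\alpha u_+\,dm\ge0$. For the monotonicity of $u_+$ itself, note that $\frac{du}{d\bs}$ is increasing, so for $x\ge e$ one has $\frac{du}{d\bs}(x)\,J(x)\le\int_x^r\frac{du}{d\bs}(y)\,u(y)^{-2}\,d\bs(y)=\frac1{u(x)}-\frac1{u(r)}\le\frac1{u(x)}$, giving $\frac{du_+}{d\bs}(x)\le0$; for $x<e$ one has $\frac{du}{d\bs}(x)\le0$ and the sign is even clearer, so $u_+$ is decreasing. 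To obtain $u_+(r)=0$ when $r$ is not entrance I would split cases. If $\sigma(r)<\infty$ (regular or exit), then $u(r)\le e^{\alpha\sigma(r)}<\infty$ while $J(x)\to0$ (the total integral $\int_e^r u^{-2}\,d\bs$ being finite by Part (1)), so $u_+(r)=0$. If $r$ is natural ($\sigma(r)=\mu(r)=\infty$) I would argue by contradiction: if $u_+(r)=c>0$, then $u_+\ge c$ near $r$ and $\frac{du_+}{d\bs}\le0$ give $|\frac{du_+}{d\bs}(y)|\ge2\alpha\int_y^r u_+\,dm\ge2\alpha c\,m((y,r))$, whence, by the Fubini identity $\int_e^r m((y,r))\,d\bs(y)=\mu(r)=\infty$ (with infinite tail), $u_+(x)=c+\int_x^r|\frac{du_+}{d\bs}|\,d\bs=\infty$, a contradiction.

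\emph{The derivative at the boundary, and the main obstacle.} Finally, for $\frac{du_+}{d\bs}(r)=0$ when $r$ is entrance or natural (that is, $\sigma(r)=\infty$), suppose $\frac{du_+}{d\bs}(r)=-\beta<0$. As $\frac{du_+}{d\bs}$ is increasing this forces $|\frac{du_+}{d\bs}|\ge\beta$, hence $u_+(\xi)\ge\beta(\bs(r)-\bs(\xi))$; feeding this back gives $|\frac{du_+}{d\bs}(y)|\ge2\alpha\int_y^r u_+\,dm\ge2\alpha\beta\int_y^r(\bs(r)-\bs(\xi))\,m(d\xi)$, and the second Fubini identity $\int_e^r(\bs(r)-\bs(\xi))\,m(d\xi)=\sigma(r)=\infty$ (again with infinite tail) makes the right-hand side infinite, contradicting the finiteness of $\frac{du_+}{d\bs}(y)$ for $y<r$ (if instead $\bs(r)=\infty$ the contradiction is already visible in $u_+(\xi)\ge\beta(\bs(r)-\bs(\xi))$). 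I expect this boundary analysis to be the main difficulty: the estimates of Part (1) and the reduction-of-order computation of Part (2) are routine, whereas (3)--(4) require the two Fubini identities $\int_e^r m((y,r))\,d\bs(y)=\mu(r)$ and $\int_e^r(\bs(r)-\bs(\xi))\,m(d\xi)=\sigma(r)$ together with a careful matching of the four boundary types to the finiteness of $\sigma(r)$ and $\mu(r)$.
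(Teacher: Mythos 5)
Your proposal is correct, but note that the paper itself offers no proof to compare against: its entire proof of this lemma reads ``See \cite[II, \S2]{M68}.'' What you have produced is a self-contained reconstruction of that cited classical argument, and it is the right one: the induction $u^n\le \sigma^n/n!$ via the recursion $\frac{d}{dm}\frac{du^{n+1}}{d\bs}=u^n$ for part (1); reduction of order with constant Wronskian $u\,\frac{du_+}{d\bs}-u_+\,\frac{du}{d\bs}\equiv -1$ for part (2); and, for the boundary statements in (3)--(4), the two Tonelli identities $\int_e^r m((y,r))\,d\bs(y)=\mu(r)$ and $\int_e^r(\bs(r)-\bs(\xi))\,m(d\xi)=\sigma(r)$, combined with the (correct) observation that the head of each integral is finite, so divergence of $\mu(r)$ or $\sigma(r)$ forces divergence of the tail. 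Your matching of cases to the boundary classification is also accurate: $\sigma(r)<\infty$ covers regular and exit (where $u(r)<\infty$ and $J(x)\downarrow 0$ give $u_+(r)=0$ directly), the $\mu(r)=\infty$ bootstrap covers natural, and the $\sigma(r)=\infty$ bootstrap, split according to whether $\bs(r)=\infty$ or $\bs(r)<\infty$, handles $\frac{du_+}{d\bs}(r)=0$ for entrance and natural.

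One blemish, inherited from the paper's statement rather than introduced by you: with $u:=\sum_n \alpha^n u^n$ the recursion gives $d\left(\frac{du}{d\bs}\right)=\alpha u\,dm$, whereas \eqref{eq:22} requires the factor $2\alpha$; your part (2) asserts \eqref{eq:22} ``from the recursion'' and then invokes $d\left(\frac{du}{d\bs}\right)=2\alpha u\,dm$, which carries the same factor-of-$2$ mismatch that already sits between \eqref{eq:21} and the series definition in the paper. The consistent reading is $u=\sum_n(2\alpha)^n u^n$ (i.e., Mandl's $\alpha$ is this paper's $2\alpha$), after which the bounds in part (1) hold with $\alpha$ replaced by $2\alpha$ and every qualitative assertion in (2)--(4) --- positivity, monotonicity, the Wronskian computation, and both boundary bootstraps --- goes through verbatim, since those arguments use only the sign of the constant, not its value.
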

\begin{proof}
See \cite[II,  \S2]{M68}. 
\end{proof}

\section{Analytic treatment of harmonic functions}

Let $(\sE^0,\sF^0)$ be the part Dirichlet form of $(\sE,\sF)$ on $\mathring{I}$,  i.e.  
\[
\begin{aligned}
	&\sF^0:=\{f\in \sF: f=0\text{ q.e. on }I\setminus \mathring{I}\}, \\
	&\sE^0(f,g):=\sE(f,g),\quad f,g\in \sF^0.  
\end{aligned}\]
It is a regular Dirichlet form on $L^2(\mathring I, m)$ having a special standard core
\[
\mathscr C_{\mathring I}:=\{\varphi\circ \bs: \varphi\in C_c^\infty(\bs(\mathring I))\}.  
\]
For any $\alpha>0$,  $\sF^0$ is a closed subspace of $\sF$ under the norm $\|\cdot\|_{\sE_\alpha}$ and the following direct product decomposition holds true:
\[
	\sF=\sF^0\oplus_{\sE_\alpha} \cH_\alpha,
\]
where $\cH_\alpha:=\{f\in \sF: \sE_\alpha(f,g)=0, \forall g\in \sF^0\}$.  The function in $\cH_\alpha$ is called $\alpha$-harmonic.  

We note that $\sF^0=\sF$ if and only if $I\setminus \mathring{I}$ is $\sE$-polar,  i.e.  neither $l$ nor $r$ is reflecting,  or equivalently,  $I_e=\mathring{I}$.  In this case $\cH_\alpha=\{0\}$ for all $\alpha>0$.  When $r$ is reflecting (resp.  $l$ is reflecting),  Lemma~\ref{LM21}~(1) yields that $u_-(r)\in (0,\infty)$ (resp.  $u_+(l)\in (0,\infty)$).  Meanwhile set
\begin{equation}\label{eq:31}
	u^\alpha_r(x):=\frac{u_-(x)}{u_-(r)},\quad x\in (l,r];  \quad \left(\text{resp.  }u^\alpha_l(x):=\frac{u_+(x)}{u_+(l)},\quad x\in [l,r)\right)
\end{equation}
It is worth pointing out that $u_+$ and $u_-$ are linear independent; see \cite[II,  \S3\#5]{M68}. 
The result below characterizes the $\alpha$-harmonic functions.  

\begin{theorem}\label{THM31}
For $\alpha>0$,  the following holds:
\[
	\cH_\alpha=\left\lbrace 
	\begin{aligned}
	&\{0\},\qquad\qquad\quad \text{when }  I_e=(l,r);\\
	&\text{span}\{u^\alpha_r\},\qquad\;\, \text{when }  I_e=(l,r]; \\
	&\text{span}\{u^\alpha_l\},\qquad\;\, \text{when }I_e=[l,r);\\
	&\text{span}\{u^\alpha_l,  u^\alpha_r\},\quad \text{when }I_e=[l,r].
	\end{aligned}
	\right. 
\]
\end{theorem}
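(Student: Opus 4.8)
The plan is to convert the statement into a dimension count for $\cH_\alpha$ together with the explicit construction of the claimed generators. Since $\cH_\alpha$ is by definition the $\sE_\alpha$-orthogonal complement of $\sF^0$ in $\sF$, the decomposition $\sF=\sF^0\oplus_{\sE_\alpha}\cH_\alpha$ gives $\dim\cH_\alpha=\dim(\sF/\sF^0)$. Because $I\setminus I_e$ is $\sE$-polar and $\sF\subset C(I_e)$, a function $f\in\sF$ lies in $\sF^0$ exactly when $f$ vanishes at every reflecting endpoint; hence the evaluation map $\Phi\colon f\mapsto(f(j))_{j\text{ reflecting}}$ into $\bR^k$ (with $k$ the number of reflecting endpoints) has kernel precisely $\sF^0$, so $\dim(\sF/\sF^0)=\dim\operatorname{Im}\Phi\le k$. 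The case $I_e=(l,r)$ then needs no argument: there is no reflecting endpoint, $\sF^0=\sF$, and $\cH_\alpha=\{0\}$, as already observed. For the remaining cases I would exhibit, for each reflecting endpoint, one $\alpha$-harmonic function with controlled boundary values and read off the dimension from $\Phi$.

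The central claim is that $u^\alpha_r\in\cH_\alpha$ whenever $r$ is reflecting (and symmetrically for $u^\alpha_l$). This splits into membership $u^\alpha_r\in\sF$ and $\sE_\alpha$-orthogonality to $\sF^0$. For orthogonality I would use that $u^\alpha_r$ is a scalar multiple of $u_-$, hence a solution of \eqref{eq:21} by Lemma~\ref{LM21}(2); testing against the core $\mathscr C_{\mathring I}$ and integrating the relation $\tfrac12 d\big(\tfrac{du_-}{d\bs}\big)=\alpha u_-\,dm$ by parts (the boundary terms drop out by compact support) gives $\sE_\alpha(u^\alpha_r,g)=0$ for all $g\in\mathscr C_{\mathring I}$, which extends to $\sF^0$ by density.

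The technical heart is membership $u^\alpha_r\in\sF$, i.e.\ finiteness of $\int_I u_-^2\,dm$ and $\int_I(du_-/d\bs)^2\,d\bs$ plus the Dirichlet condition at any absorbing endpoint. My main tool would be the energy identity got by integrating the equation by parts on a compact $[a,b]\subset\mathring I$:
\[
\int_a^b\Big(\tfrac{du_-}{d\bs}\Big)^2 d\bs+2\alpha\int_a^b u_-^2\,dm=\Big[u_-\tfrac{du_-}{d\bs}\Big]_a^b.
\]
Near the reflecting endpoint $r$, which is regular (so $\bs(r)<\infty$, $m(r-)<\infty$, $u_-(r)<\infty$), both quantities on the left are controlled directly since $u_-$ and $du_-/d\bs$ stay bounded. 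Near the opposite endpoint $l$ I would let $a\downarrow l$: both integrands being non-negative, finiteness reduces to showing that the boundary value $u_-(a)\,\tfrac{du_-}{d\bs}(a)$ has a finite limit. Using that $u_-$ and $du_-/d\bs$ are increasing and non-negative (Lemma~\ref{LM21}(4)), both stay bounded on $(l,e]$ by their values at $e$; moreover $u_-(l)=0$ unless $l$ is entrance, while $du_-/d\bs(l)=0$ when $l$ is entrance or natural, so the product $u_-(l)\,\tfrac{du_-}{d\bs}(l)$ vanishes in every boundary type. Hence the boundary term vanishes and both integrals over $(l,e]$ are finite, while the Dirichlet condition is automatic (at an absorbing, hence regular non-entrance endpoint $u_-$ vanishes). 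I expect this to be the main obstacle, precisely because $m$ may carry infinite mass near $l$ (exit or natural) so that $\int u_-^2\,dm$ cannot be estimated crudely; the energy identity is exactly what makes the exit/entrance/natural cases uniform.

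Finally I would assemble the count. The regular-endpoint values $u_+(r)=0$ and $u_-(l)=0$ (Lemma~\ref{LM21}(3)--(4)) pin down the images under $\Phi$. When both endpoints are reflecting, $u^\alpha_l\mapsto(1,0)$ and $u^\alpha_r\mapsto(0,1)$, so $\Phi$ is onto $\bR^2$, giving $\dim\cH_\alpha=\dim(\sF/\sF^0)=2$; the two functions are independent elements of $\cH_\alpha$, hence a basis, so $\cH_\alpha=\mathrm{span}\{u^\alpha_l,u^\alpha_r\}$. When exactly one endpoint, say $r$, is reflecting, $\Phi\colon f\mapsto f(r)$ sends $u^\alpha_r\mapsto1$, so $\dim\cH_\alpha=1$ and $\cH_\alpha=\mathrm{span}\{u^\alpha_r\}$, the case of $l$ reflecting being symmetric. (One could instead note that every $u\in\cH_\alpha$ solves \eqref{eq:21} weakly and so lies in $\mathrm{span}\{u_+,u_-\}$, bounding $\dim\cH_\alpha\le2$ via Lemma~\ref{LM21}; but the evaluation map yields the exact dimension more directly.) This closes all four cases.
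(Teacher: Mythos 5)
Your proposal is correct, and its second half takes a genuinely different route from the paper. The membership half coincides with the paper's: you prove $u^\alpha_r\in\sF$ via the same Stieltjes integration-by-parts energy identity (the paper's \eqref{eq:23}), with the same monotonicity facts from Lemma~\ref{LM21}(3)--(4) controlling the boundary term at the non-reflecting endpoint and giving the Dirichlet condition at an absorbing endpoint, and you prove $\sE_\alpha$-orthogonality by testing against the core $\mathscr C_{\mathring I}$ exactly as the paper does. Where you diverge is the upper bound on $\dim\cH_\alpha$: the paper shows that every $h\in\cH_\alpha$ solves \eqref{eq:21} weakly on compact subintervals, invokes the uniqueness theory of \cite[II, \S4]{M68} to get $\cH_\alpha\subset\mathrm{span}\{u_+,u_-\}$, and then runs a separate case analysis (on $\sigma(l)$ and $m(l+)$) to exclude $u_+\notin\sF$; you instead observe that $\cH_\alpha\cong\sF/\sF^0\cong\mathrm{Im}\,\Phi\subset\bR^k$ for the evaluation map $\Phi(f)=(f(j))_{j\text{ reflecting}}$, whose kernel is exactly $\sF^0$ by the facts already recorded in Section~1 ($I\setminus I_e$ is $\sE$-polar, singletons in $I_e$ have positive capacity, $\sF\subset C(I_e)$), and then pin down the dimension by the boundary values $u^\alpha_l(l)=u^\alpha_r(r)=1$, $u_+(r)=u_-(l)=0$ at regular endpoints. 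This is essentially the simplification the paper's own remark after Theorem~\ref{THM31} anticipates -- there the dimension of $\cH_\alpha$ is read off from the probabilistic representation \eqref{eq:41} -- but you obtain it purely analytically, with no forward reference to Section~4 and no appeal to \cite{M68} beyond Lemma~\ref{LM21}. What each approach buys: the paper's argument yields the extra structural information that $\cH_\alpha\subset\mathrm{span}\{u_+,u_-\}$ and that $u_+\notin\sF$ when $l$ is not reflecting (facts reused implicitly in Section~5), while yours is shorter, avoids the exclusion case analysis entirely, and makes transparent that the dimension of $\cH_\alpha$ is exactly the number of reflecting endpoints.
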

\begin{proof}
Only the case $I_e=(l,r]$ will be treated,  and the other cases can be proved by a similar way.   

We first show that $u^\alpha_r\in \sF$.  To do this,  take $\varepsilon>0$.  On account of Lemma~\ref{LM21}~(2),  $u^\alpha_r$ is a solution of \eqref{eq:21}.  Denote the $d\bs$-a.e. version of $du^\alpha_r/d\bs$ satisfying \eqref{eq:22} still by $du^\alpha_r/d\bs$.  Both $u^\alpha_r$ and $du^\alpha_r/d\bs$ are of bounded variation on $(l+\varepsilon,r-\varepsilon)$,  and $u^\alpha_r$ is continuous.  Then it follows from \eqref{eq:22} that
\[
\begin{aligned}
	\int_{l+\varepsilon}^{r-\varepsilon} \left(\frac{du^\alpha_r}{d\bs}\right)^2d\bs&=\int_{l+\varepsilon}^{r-\varepsilon} \frac{du^\alpha_r}{d\bs}du^\alpha_r \\
	&=\frac{du^\alpha_r}{d\bs}\cdot u^\alpha_r \bigg |_{l+\varepsilon}^{r-\varepsilon}-2\alpha \int_{(l+\varepsilon, r-\varepsilon]}u^\alpha_r(\xi)^2m(d\xi)
\end{aligned}\]
Hence 
\begin{equation}\label{eq:23}
	\int_{l+\varepsilon}^{r-\varepsilon} \left(\frac{du^\alpha_r}{d\bs}\right)^2d\bs+2\alpha \int_{(l+\varepsilon, r-\varepsilon]}u^\alpha_r(\xi)^2m(d\xi)=\frac{du^\alpha_r}{d\bs}\cdot u^\alpha_r \bigg |_{l+\varepsilon}^{r-\varepsilon}.  
\end{equation}
Since $u^\alpha_r$ is positive and increasing and $u^\alpha_r(r)=1$,  it follows that $u^\alpha_r(l):=\lim_{x\downarrow l}u^\alpha_r(x)$ is finite.  Note that $du^\alpha_r/d\bs\geq 0$ since $u^\alpha_r$ is increasing.  In addition, \eqref{eq:22} yields that $d u^\alpha_r/d\bs$ is increasing and $du^\alpha_r/d\bs(r):=\lim_{y\uparrow r}du^\alpha_r/d\bs(y)$ is finite.  Letting $\varepsilon\downarrow 0$ in \eqref{eq:23} and noticing that $u^\alpha_r(l)=0$ if $l$ is absorbing in view of Lemma~\ref{LM21}~(4),  we get that $u^\alpha_r\in \sF$. 

Next we assert that $u^\alpha_r\in \cH_\alpha$.  Let $f\in \mathscr C_{\mathring I}=\{\varphi \circ \bs: \varphi\in C_c^\infty(\bs(\mathring I))\}$.  Mimicking the derivation of \eqref{eq:23},  one can obtain that
\[
	\int_{l+\varepsilon}^{r-\varepsilon} \frac{du^\alpha_r}{d\bs}\frac{df}{d\bs}d\bs+2\alpha \int_{(l+\varepsilon, r-\varepsilon]}u^\alpha_r(\xi)f(\xi)m(d\xi)=\frac{du^\alpha_r}{d\bs}\cdot f \bigg |_{l+\varepsilon}^{r-\varepsilon}.  
\]
Note that $\lim_{\varepsilon\downarrow 0} f(r-\varepsilon)=\lim_{\varepsilon\downarrow 0}f(l+\varepsilon)=0$.  Hence $\sE_\alpha(u^\alpha_r,f)=0$.  Since $\mathscr C_{\mathring I}$ is a special standard core of $(\sE^0,\sF^0)$,  we get $u^\alpha_r\in \cH_\alpha$.  

Thirdly we prove that $\cH_\alpha\subset \text{span}\{u_+,u_-\}$.  Let $h\in \cH_\alpha$. Take an arbitrary interval $J:=(a,b)\subset [a,b]\subset I_e$ with $e\in J$.  For any $f\in \sF$ with $\text{supp}[f]\subset J$,  it holds $\sE_\alpha(h,f)=0$.  Set 
\[
	F_h(x):=\int_e^x h(\xi)m(d\xi).  
\]
Then $F_h$ is of bounded variation on $[a,b]$ and $f\in C([a,b])$ is also of bounded variation.  Hence
\[
	\int_I f(x)h(x)m(dx)=\int_{[a,b]} f(x)dF_h(x)=-\int_{[a,b]}F_h(x)df(x).
\]
It follows from $\sE_\alpha(h, f)=0$ that 
\[
	\int_{[a,b]}\left(\frac{dh}{d\bs}(x)-2\alpha F_h(x)\right)df(x)=0.  
\]
Note that $\mathscr{C}_{J}:=\{\varphi\circ \bs: \varphi\in C_c^\infty(\bs(J))\}\subset \sF$ and $\text{supp}[f]\subset J$ for any $f\in \mathscr{C}_J$.   This yields that $dh/d\bs-2\alpha F_h$ is constant on $J$.  Particularly,  $h|_{[a,b]}\in C([a,b])$ is a solution of the equation \eqref{eq:21} restricted to $J$.  In view of \cite[II,  \S4]{M68},  one  can conclude that $h\in \text{span}\{u_+,u_-\}$.  

Finally it suffices to prove $u_+\notin \sF$.  In fact,  it is straightforward to verify that $du_+/d\bs(r)\cdot u_+(r)$ is finite.  Since $u_+\geq 0$ satisfies  \eqref{eq:22} and $du/d\bs(e)=0$, it follows that $du_+/d\bs$ is increasing and
\[
	\frac{du_+}{d\bs}(l)\leq \frac{du_+}{d\bs}(e)=-\frac{1}{u(e)}=-1.  
\]
If $\sigma(l)=\infty$,  then Lemma~\ref{LM21}~(1) yields that $u_+(l)=\infty$.  An analogical derivation of \eqref{eq:23} leads to $u_+\notin \sF$.  If $\sigma(l)<\infty$,  then Lemma~\ref{LM21}~(1) implies that
\[
	u_+(l)\geq \int_l^r u(y)^{-2}d\bs(y)>0.  
\]
Meanwhile $m(l+)=\infty$ leads to $u_+\notin L^2(I,m)$ and $m(l+)<\infty$   corresponds to an absorbing endpoint $l$.  In the latter case $u_+\notin \sF$ because $u_+(l)\neq 0$.  That completes the proof.  
\end{proof}
\begin{remark}
The third and final steps in this proof would be simplified if we apply the probabilistic representation of harmonic functions \eqref{eq:41}.  In the case $I_e=(l,r]$,  \eqref{eq:41} shows that $\cH_\alpha$ is of dimension $1$.  Then $\cH_\alpha=\text{span}\{u^\alpha_r\}$ can be concluded after $u_r^\alpha\in \cH_\alpha$ is obtained. 
\end{remark}

Let us turn to consider the special case $\alpha=0$.  Denote by $\sF_\re$ the extended Dirichlet space of $(\sE,\sF)$,  and 
\[
	\sF^0_\re:=\{f\in \sF_\re: f=0,\text{ q.e. on }I\setminus \mathring{I}\}.  
\]
Define $\cH:=\{u\in \sF: \sE(u, f)=0,\forall f\in \sF^0_\re\}$.  Every function in $\cH$ is called harmonic.  Before moving on,  we prepare a lemma to give the expression of $\sF_\re$.  

\begin{lemma}\label{LM33}
It holds that
\[
\begin{aligned}
\sF_\re=\{f<\infty,  m\text{-a.e.}: &f\ll d\bs,  df/d\bs\in L^2(I,d\bs), \\
 & f(j)=0 \text{ if }|\bs(j)|<\infty\text{ but }j\notin I \text{ for }j=l\text{ or }r\}.  
\end{aligned}\]
\end{lemma}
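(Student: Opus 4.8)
The plan is to prove the two inclusions separately; write $\mathcal G$ for the set displayed on the right-hand side, and recall the definition of the extended Dirichlet space: $f\in\sF_\re$ precisely when $|f|<\infty$ $m$-a.e.\ and there is a sequence $\{f_n\}\subset\sF$ that is Cauchy for the energy $\sE$ (not $\sE_1$) with $f_n\to f$ $m$-a.e. Two endpoint facts will be used throughout. First, any $h$ with $h\ll d\bs$ and $dh/d\bs\in L^2(I,d\bs)$ has a finite limit $h(j)$ at an approachable endpoint $j$ (that is, when $|\bs(j)|<\infty$), by the discussion preceding \eqref{eq:12} and \cite[\S2.2.3]{CF12}. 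Second, if moreover $h\in L^2(I,m)$ and $m(j-)=\infty$ (resp.\ $m(l+)=\infty$), then $h(j)=0$, since otherwise $|h|$ would be bounded away from $0$ near $j$ and fail to be square-integrable. I note in passing that these facts already give $\mathcal G\cap L^2(I,m)=\sF$, consistent with the general identity $\sF=\sF_\re\cap L^2(I,m)$.

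For \emph{necessity} ($\sF_\re\subset\mathcal G$), I would start from an approximating sequence $\{f_n\}$ as above. Since $\sE(f_n-f_k)=\frac12\int_I(d(f_n-f_k)/d\bs)^2\,d\bs$, the derivatives $df_n/d\bs$ are Cauchy in $L^2(I,d\bs)$, with limit $g$. On each $[a,b]\subset\mathring I$ I would write $f_n(x)=f_n(x_0)+\int_{x_0}^x(df_n/d\bs)\,d\bs$ and use Cauchy--Schwarz to pass to the limit uniformly; choosing $x_0$ with $f_n(x_0)\to f(x_0)$ (possible for $m$-a.e.\ $x_0$ as $m$ has full support) identifies $f$, up to an $m$-null set, with the continuous function $x\mapsto f(x_0)+\int_{x_0}^x g\,d\bs$. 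Hence $f\ll d\bs$ with $df/d\bs=g\in L^2(I,d\bs)$. For the boundary condition, suppose $|\bs(r)|<\infty$ and $r\notin I$ (the case $j=l$ is symmetric); then $r$ is absorbing, exit, or natural, and in every case $f_n(r)=0$ — by the definition of $\sF$ when $r$ is absorbing, and by the second endpoint fact (as $m(r-)=\infty$) when $r$ is exit or approachable natural. The key estimate is
\[
|f_n(x)|=\left|\int_x^r\frac{df_n}{d\bs}\,d\bs\right|\le\left(\int_x^r\Big(\frac{df_n}{d\bs}\Big)^2 d\bs\right)^{1/2}(\bs(r)-\bs(x))^{1/2},
\]
which, on letting $n\to\infty$ and then $x\uparrow r$, yields $|f(x)|\le(\int_x^r g^2\,d\bs)^{1/2}(\bs(r)-\bs(x))^{1/2}\to0$, so $f(r)=0$.

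For \emph{sufficiency} ($\mathcal G\subset\sF_\re$), given $f\in\mathcal G$ I would build the approximating sequence by multiplying $f$ with scale cutoffs that repair the only possible failure of $L^2(I,m)$-integrability, which a boundary analysis localises at natural endpoints: near a regular, exit, or entrance endpoint $f$ is already in $L^2(I,m)$ (using $f(j)=0$ together with $\sigma(j)<\infty$ at exit, and $\mu(j)<\infty$ at entrance), so no cutoff is needed there. Near a non-approachable endpoint ($|\bs(r)|=\infty$) I would use the logarithmic cutoff $\rho_n$ equal to $1$ on $\{|\bs|\le n\}$ and interpolating like $\log(n^2/\bs)/\log n$ on $\{n\le\bs\le n^2\}$, for which $\int(d\rho_n/d\bs)^2\,d\bs=O(n^{-1}(\log n)^{-2})$ and, since $|f|=O(\bs^{1/2})$ there, $\int f^2(d\rho_n/d\bs)^2\,d\bs=O((\log n)^{-1})$. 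Near an approachable natural endpoint ($\bs(r)<\infty$, $m(r-)=\infty$, $f(r)=0$) a linear-in-scale cutoff vanishing in a shrinking $d\bs$-neighbourhood of $r$ suffices, the decay $|f(x)|^2\le(\int_x^r(df/d\bs)^2 d\bs)(\bs(r)-\bs(x))$ forcing the cutoff term to vanish. Setting $f_n:=\rho_n f$ with the appropriate cutoff at each end, each $f_n$ lies in $L^2(I,m)$, equals $f$ near regular endpoints so that $f_n(j)=0$ at absorbing $j$, and has $df_n/d\bs=\rho_n\,df/d\bs+f\,d\rho_n/d\bs\in L^2(I,d\bs)$; thus $f_n\in\sF$. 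Finally, dominated convergence gives $\int(1-\rho_n)^2(df/d\bs)^2\,d\bs\to0$ and the two cutoff estimates control the terms containing $d\rho_n/d\bs$, so $\sE(f-f_n)\to0$, while $f_n\to f$ pointwise; hence $f\in\sF_\re$. The main obstacle is exactly this last construction: choosing the cutoff (logarithmic versus linear) suited to each endpoint type and matching it against the sharp growth/decay bounds on $f$ coming from $df/d\bs\in L^2(I,d\bs)$ and the prescribed boundary value, so that $L^2(I,m)$-membership, the absorbing boundary conditions, and energy convergence hold simultaneously.
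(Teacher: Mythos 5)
Your proof is correct, but it takes a genuinely different route from the paper, whose entire proof of Lemma~\ref{LM33} is the citation \cite[Theorem~2.2]{F14}. What you have written is, in effect, a self-contained re-derivation of Fukushima's result in this one-dimensional setting, argued directly from the definition of $\sF_\re$. Both directions check out. For $\sF_\re\subset\mathcal G$: an $\sE$-Cauchy sequence has $df_n/d\bs$ Cauchy in $L^2(I,d\bs)$ since $\sE(f)=\frac12\int_I(df/d\bs)^2d\bs$, the anchoring at a point $x_0$ where $f_n(x_0)\to f(x_0)$ is legitimate (the a.e.\ convergence set has full $m$-measure, hence is nonempty by full support), and your case analysis at an approachable endpoint $j\notin I$ is exhaustive (regular-absorbing by the boundary condition in \eqref{eq:12}; exit or natural via $m(j\pm)=\infty$ forcing $f_n(j)=0$ for $f_n\in L^2(I,m)$), after which the Cauchy--Schwarz tail bound correctly yields $f(j)=0$. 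For $\mathcal G\subset\sF_\re$: your endpoint bookkeeping is right — $L^2(I,m)$-integrability near regular ends is trivial, near exit ends it follows from $f(j)=0$ and $\sigma(j)<\infty$ (Fubini gives $\int^r(\bs(r)-\bs)\,dm<\infty$), near entrance ends from $|f|=O(\bs^{1/2})$ and $\mu(j)<\infty$ — and both cutoff estimates are accurate: the logarithmic cutoff gives $\int f^2(d\rho_n/d\bs)^2d\bs\lesssim(\log n)^{-2}\int_n^{n^2}s^{-1}ds=O((\log n)^{-1})$, and the linear cutoff over a $d\bs$-interval of length $\delta_n$ gives a term bounded by $2\int_{\{\bs(r)-\bs\le2\delta_n\}}(df/d\bs)^2d\bs\to0$ thanks to $f(r)=0$; each $\rho_nf$ lies in $\sF$ because it is supported, near any cut end, in a compact subinterval where $m$ is finite, and it agrees with $f$ near regular ends so the absorbing condition survives. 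One cosmetic caveat: entrance endpoints are also non-approachable, so your blanket prescription of the logarithmic cutoff ``near a non-approachable endpoint'' should be read as applying only to natural ones — though, as you yourself note, no cutoff is needed at an entrance end (and the logarithmic one would be harmless there anyway), so this is a wording slip, not a gap. As for the trade-off: the paper's citation buys brevity and rests on a general theorem, while your argument buys self-containedness and makes visible exactly where each boundary classification enters — the logarithmic cutoff at a non-approachable natural end is precisely the classical recurrence-type approximation, and the linear cutoff isolates the approachable natural case where $m$ explodes but the scale stays finite.
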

\begin{proof}
See \cite[Theorem~2.2]{F14}.  
\end{proof}

When $|\bs(l)|+|\bs(r)|<\infty$,  set
\[
	u^0_l(x):=\frac{\bs(r)-\bs(x)}{\bs(r)-\bs(l)},\quad u^0_r(x):=\frac{\bs(x)-\bs(l)}{\bs(r)-\bs(l)},\quad x\in \mathring I.  
\]	
The main result characterizing harmonic functions is as follows. 

\begin{theorem}\label{THM34}
The following hold:
\begin{itemize}
\item[(1)]  When $I_e=(l,r)$,  $\cH=\{0\}$ if $\bs(l)>-\infty$ or $\bs(r)<\infty$; otherwise $\cH=\text{span}\{1\}$.  
\item[(2)] When $I_e=[l,r)$,  $\cH=\text{span}\{u^0_l\}$ if $\bs(r)<\infty$; otherwise $\cH=\text{span}\{1\}$. 
\item[(3)] When $I_e=(l,r]$,  $\cH=\text{span}\{u^0_r\}$ if $\bs(l)>-\infty$; otherwise $\cH=\text{span}\{1\}$. 
\item[(4)] When $I_e=[l,r]$,  $\cH=\text{span}\{u^0_l,u^0_r\}$. 
\end{itemize}
\end{theorem}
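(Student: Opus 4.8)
The plan is to adapt the three-step strategy of Theorem~\ref{THM31} to the degenerate parameter $\alpha=0$, where the equation \eqref{eq:21} becomes $\tfrac{d}{dm}\tfrac{du}{d\bs}=0$, whose solutions are exactly the affine functions of the scale, $u=a+b\bs$ with $a,b\in\bR$. \emph{Step 1 (reduction to affine functions).} For $h\in\cH$ I would test against $f\in\mathscr C_{\mathring I}\subset\sF^0\subset\sF^0_\re$ supported in a subinterval $J=(a,b)$ with $[a,b]\subset\mathring I$. Running the integration by parts that produced \eqref{eq:23} but now with $\alpha=0$, the relation $0=\sE(h,f)=\tfrac12\int_J\tfrac{dh}{d\bs}\,df$ together with the vanishing of the boundary terms forces $dh/d\bs$ to be constant on each such $J$, hence on all of $\mathring I$ (this is exactly the $\alpha=0$ specialization of the step in Theorem~\ref{THM31} showing $dh/d\bs-2\alpha F_h$ is constant). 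Thus $h=a+b\bs$ and $\cH\subset\mathrm{span}\{1,\bs\}$, recalling that $\mathrm{span}\{u^0_l,u^0_r\}=\mathrm{span}\{1,\bs\}$ with $u^0_l(r)=u^0_r(l)=0$.

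\emph{Step 2 (orthogonality is automatic).} For $u=a+b\bs$ and any $f\in\sF^0_\re$ one computes $\sE(u,f)=\tfrac{b}{2}\int_I df=\tfrac{b}{2}\big(f(r)-f(l)\big)$. I would then show $f(l)=f(r)=0$ whenever $b\neq0$: indeed $b\neq 0$ forces $\bs(r)-\bs(l)<\infty$ (otherwise $du/d\bs=b\notin L^2(I,d\bs)$), so both $\bs(l),\bs(r)$ are finite; at a reflecting endpoint the defining q.e.\ condition of $\sF^0_\re$ gives the vanishing, while a non-reflecting endpoint carrying a finite scale value must lie outside $I$ (a point $j\in I$ with $|\bs(j)|<\infty$ is regular, hence reflecting), so Lemma~\ref{LM33} supplies $f(j)=0$. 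When $b=0$ the orthogonality is trivial. Consequently every admissible affine function lies in $\cH$, and the whole problem collapses to deciding membership in the domain described by Lemma~\ref{LM33}.

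\emph{Step 3 (case analysis).} By Lemma~\ref{LM33} an affine $u=a+b\bs$ qualifies iff (A) $b=0$ or $\bs(r)-\bs(l)<\infty$, and (B) $u(j)=0$ at every endpoint $j\notin I$ with $|\bs(j)|<\infty$. I would read off the four cases from $I_e$. If $I_e=[l,r]$, both endpoints are reflecting, so $\bs(l),\bs(r)$ are finite and no condition (B) occurs, leaving the two-dimensional $\mathrm{span}\{1,\bs\}=\mathrm{span}\{u^0_l,u^0_r\}$. If exactly one endpoint, say $l$, is reflecting ($I_e=[l,r)$), no constraint is imposed at $l$; when $\bs(r)<\infty$ then $r\notin I$ forces $u(r)=0$, singling out $u^0_l$, whereas when $\bs(r)=\infty$ condition (A) forces $b=0$ and the surviving constants give $\mathrm{span}\{1\}$. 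The case $I_e=(l,r]$ is symmetric. Finally, if $I_e=(l,r)$ neither endpoint is reflecting: a finite scale value at an endpoint forces $u$ to vanish there via (B), and combined with (A) this annihilates every affine function, so $\cH=\{0\}$ unless $\bs(l)=-\infty$ and $\bs(r)=\infty$, in which case only constants survive and $\cH=\mathrm{span}\{1\}$.

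The main obstacle I anticipate is the boundary bookkeeping underpinning Steps~2 and~3. One must verify cleanly the implication that a non-reflecting endpoint with finite scale value is always outside $I$, so that Lemma~\ref{LM33}'s vanishing condition (rather than the absorbing condition built into $\sF$) is the operative constraint, and match the dichotomy ``$\bs(l)>-\infty$ or $\bs(r)<\infty$'' to the exit/entrance/natural classification. A related delicate point is that in the recurrent subcases yielding $\cH=\mathrm{span}\{1\}$ the constant must genuinely belong to the extended Dirichlet space $\sF_\re$ even when $m(I)=\infty$; once the membership is read through Lemma~\ref{LM33} this is immediate, but it is the place where the extended domain, not the $L^2$ domain, is essential, and it deserves explicit comment.
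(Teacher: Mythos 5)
Your proposal is correct, and on parts (2)--(4) it follows essentially the same route as the paper: test $h\in\cH$ against the core $\mathscr C_{\mathring I}\subset \sF_\re^0$ to force $dh/d\bs$ to be $d\bs$-a.e.\ constant, hence $h=c_1\bs+c_2$, and then read the surviving coefficients off the description of $\sF_\re$ in Lemma~\ref{LM33} (your conditions (A) and (B) are exactly the $L^2(I,d\bs)$-requirement and the vanishing requirement there). The genuine divergence is part (1). There the paper does not run the affine analysis at all; it exploits $\sF_\re^0=\sF_\re$ to take $f=h$, so $\sE(h,h)=0$, and then invokes the dichotomy that $(\sE,\sF)$ is transient if and only if $\bs(l)>-\infty$ or $\bs(r)<\infty$ (\cite[Theorem~2.2.11 and Example~3.5.7]{CF12}): transience kills $h$, while in the recurrent case \cite[Theorem~5.2.16]{CF12} forces $h$ to be constant and recurrence supplies $1\in\sF_\re$ with $\sE(1,\cdot)=0$. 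Your uniform treatment of (1) by the same affine-plus-boundary bookkeeping is more elementary and self-contained: it avoids the transience/recurrence machinery entirely and in effect re-derives the dichotomy from Lemma~\ref{LM33} (both scale limits infinite leaves exactly the constants; an approachable endpoint imposes a vanishing condition that annihilates every affine candidate), whereas the paper's argument is shorter and makes the probabilistic content of the dichotomy explicit. The ``delicate points'' you flag are already settled by the paper's setup: \S1 states that $j\in I\setminus I_e$ forces $|\bs(j)|=\infty$ and $m(\{j\})=0$, which is precisely the contrapositive of your claim that a non-reflecting endpoint with finite scale value lies outside $I$, so Lemma~\ref{LM33}'s vanishing condition is indeed the operative one; and at a reflecting endpoint your appeal to the q.e.\ condition is legitimate because singletons in $I_e$ have positive capacity. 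Your closing remark about $1\in\sF_\re$ is also well taken and matches the paper's practice: since $1$ need not belong to $L^2(I,m)$, the definition of $\cH$ must be understood inside $\sF_\re$ (the paper writes $u\in\sF$ but its own proof verifies $1\in\sF_\re$ and concludes $1\in\cH$), so your Lemma~\ref{LM33}-based reading is the consistent one.
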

\begin{proof}
\begin{itemize}
\item[(1)] In this case $\sF_\re^0=\sF_\re$.  Note that $(\sE,\sF)$ is transient,  if and only if $\bs(l)>-\infty$ or $\bs(r)<\infty$;  see,  e.g., \cite[Theorem~2.2.11 and Example~3.5.7]{CF12}.  Under the transience,  $\sE(f,f)=0$ for $f\in\sF_\re$ implies $f=0$.  Hence $\cH=\{0\}$.  When $(\sE,\sF)$ is recurrent,  it follows from \cite[Theorem~5.2.16]{CF12} that $f$ is constant for any $f\in \sF_\re$ with $\sE(f,f)=0$.  To the contrary,  the recurrence of $(\sE,\sF)$ implies that $1\in \sF_\re$ and $\sE(1,f)=0$ for any $f\in \sF_\re$.  These yield $\cH=\text{span}\{1\}$. 
\item[(2)] It is straightforward to verify that $u^0_l\in \sF_\re,  \sE(u^0_l, f)=0$ for any $f\in \sF_\re^0$ if $\bs(r)<\infty$,  and $1\in \sF_\re,  \sE(1,f)=0$ for any $f\in \sF_\re^0$ if $\bs(r)=\infty$.  Hence $\text{span}\{u^0_l\}\subset \cH$ if $\bs(r)<\infty$ and $\text{span}\{1\}\subset \cH$ if $\bs(r)=\infty$.  To the contrary,  take $h\in \cH$.  For any $f\in \sF_\re^0$,  $\sE(h,f)=0$ implies that
\begin{equation}\label{eq:33}
	\int_l^r \frac{dh}{d\bs}df=0.  
\end{equation}
Since  $\mathscr C_{\mathring I}=\{\varphi\circ \bs: \varphi\in C_c^\infty(\bs(\mathring{I}))\}\subset \sF^0_\re$,  it follows from \eqref{eq:33} that $dh/d\bs$ is constant,  i.e.  $h=c_1\cdot \bs+c_2$ for two constants $c_1$ and $c_2$.  Consequently Lemma~\ref{LM33} yields that $h=c\cdot u^0_l$ for some constant $c$ if $\bs(r)<\infty$ and $h$ is constant if $\bs(r)=\infty$.  
\item[(3)] This case can be treated by an analogical way to (2). 
\item[(4)] It is easy to verify that $u^0_l,  u^0_r\in \sF_\re$ and $\sE(u^0_l,f)=\sE(u^0_r,f)=0$ for any $f\in \sF^0_\re$.  Thus $\text{span}\{u^0_l,u^0_r\}\subset \cH$.  To the contrary,  take $h\in \cH$ and \eqref{eq:33} indicates that $h=c_1\bs+c_2$ for some constants $c_1,c_2$.  Then there exist another two constants $\tilde{c}_1$ and $\tilde{c}_2$ such that $h=\tilde{c}_1u^0_l+\tilde{c}_2u^0_r$.  Therefore $\cH\subset \text{span}\{u^0_l,u^0_r\}$. 
\end{itemize}
That completes the proof. 
\end{proof}

\section{Probabilistic counterparts of harmonic functions}

Recall that $X$ is the diffusion process associated with $(\sE,\sF)$.  Denote by $\zeta$ the lifetime of $X$ and set
\[
	\tau:=\inf\{t>0:X_t\notin (l,r)\}.  
\]
Then $X_\tau\in \{l,r\}$ for $\tau<\zeta$.  For convenience,  write $\cH_0:=\cH$ and set $u_l^0:=1$ (resp.  $u_r^0:=1$) if $I_e=[l,r)$ and $\bs(r)=\infty$ (resp. $I_e=(l,r]$ and $\bs(l)=-\infty$).  
 The harmonic functions appearing in the previous section admit the following probabilistic representation.

\begin{theorem} Assume that $I_e\neq \mathring{I}$.  Then for any $\alpha\geq 0$ and $x\in I_e$,  
\[
\begin{aligned}
	u^\alpha_l(x)&=\mathbf{E}_x\left[\re^{-\alpha \tau}; X_\tau=l,  \tau<\zeta \right],\quad \text{if }l\in I_e;\\
	u^\alpha_r(x)&=\mathbf{E}_x\left[\re^{-\alpha \tau}; X_\tau=r,  \tau<\zeta \right],\quad \text{if }r\in I_e.
\end{aligned}\]
\end{theorem}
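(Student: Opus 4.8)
The plan is to read off each analytic harmonic function from the probabilistic description of the $\sE_\alpha$-orthogonal projection onto $\cH_\alpha$. Recall that the part form $(\sE^0,\sF^0)$ is the Dirichlet form of $X$ killed at $\tau$, that every singleton contained in $I_e$ is of positive capacity, and that $\sF\subset C(I_e)$. The key input is the standard representation of the $\alpha$-harmonic projection: for $u\in\sF$ the function
\[
	H_\alpha u(x):=\mathbf{E}_x\!\left[\re^{-\alpha\tau}\,u(X_\tau);\,\tau<\zeta\right],\qquad x\in I_e,
\]
is the $\sE_\alpha$-projection of $u$ onto $\cH_\alpha$; see, e.g., \cite{CF12}. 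Since $\{\tau<\zeta\}$ forces $X_\tau$ to be a surviving, hence reflecting, endpoint, the expectation only sees the points of $I_e\setminus\mathring I$, where the boundary values of a function in $\sF$ are its genuine continuous limits. In particular $H_\alpha u=u$ holds pointwise on $I_e$ for every $u\in\cH_\alpha$.

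First I would dispose of the case $\alpha>0$. By Theorem~\ref{THM31} we have $u^\alpha_r\in\cH_\alpha$, so $H_\alpha u^\alpha_r=u^\alpha_r$; splitting the expectation according to $X_\tau\in\{l,r\}$ on $\{\tau<\zeta\}$ gives
\[
	u^\alpha_r(x)=u^\alpha_r(l)\,\mathbf{E}_x[\re^{-\alpha\tau};X_\tau=l,\tau<\zeta]+u^\alpha_r(r)\,\mathbf{E}_x[\re^{-\alpha\tau};X_\tau=r,\tau<\zeta].
\]
The first summand vanishes: if $l\notin I_e$ the event $\{X_\tau=l,\tau<\zeta\}$ is $\mathbf{P}_x$-null, since an absorbing or exit endpoint is reached only at the lifetime while an entrance or natural endpoint is never reached; if instead $l\in I_e$ then $l$ is regular, hence not entrance, and $u^\alpha_r(l)=u_-(l)/u_-(r)=0$ by Lemma~\ref{LM21}~(4). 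As $u^\alpha_r(r)=1$ by construction, the stated formula follows, and the claim for $u^\alpha_l$ is entirely symmetric via Lemma~\ref{LM21}~(3).

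For $\alpha=0$ I would run the same decomposition, now using the $0$-order projection onto $\cH$ inside the extended Dirichlet space $\sF_\re$ of Lemma~\ref{LM33}. In the transient subcases the identity $H_0u=u$ for $u\in\cH$ is again available, and the boundary values $u^0_r(l)=0$, $u^0_r(r)=1$ (and symmetrically for $u^0_l$) are read off from the explicit scale expressions; when $I_e=[l,r]$ this recovers the classical exit law $\mathbf{P}_x[X_\tau=r,\tau<\zeta]=(\bs(x)-\bs(l))/(\bs(r)-\bs(l))=u^0_r(x)$. The remaining subcases are precisely those in which the paper sets $u^0_l:=1$ or $u^0_r:=1$: there the non-reflecting endpoint has infinite scale, hence is entrance or natural and unreachable, the process therefore hits the reflecting endpoint almost surely before its lifetime, and the hitting functional equals the constant $1$, matching the convention.

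The step I expect to be the main obstacle is the $\alpha=0$ recurrent situation, where the resolvent-based proof of the projection identity $H_0u=u$ is no longer directly at hand; there one must instead verify by a direct path argument that the part process reaches the unique reflecting endpoint with probability one while avoiding the infinite-scale endpoint, which I would deduce by combining the recurrence analysis behind Theorem~\ref{THM34}~(1) with the estimates of Lemma~\ref{LM21}~(1) near an infinite-scale boundary. A secondary technical point is the passage from the quasi-everywhere projection identity to a genuine pointwise statement at the two endpoints; this is legitimate exactly because singletons in $I_e$ carry positive capacity, so that the quasi-continuous representatives agree with the continuous limits there.
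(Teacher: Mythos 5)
Your handling of $\alpha>0$ is correct and is in substance the paper's own argument: both rest on the hitting-operator representation of $\cH_\alpha$ (the paper's \eqref{eq:41}, quoted from Theorems 3.2.2 and 3.4.2 of Chen--Fukushima), on Theorem~\ref{THM31} to know $u^\alpha_j\in\cH_\alpha$, on Lemma~\ref{LM21} for the boundary values, and on the positive capacity of singletons in $I_e$ to upgrade the quasi-everywhere projection identity to a pointwise one. The only cosmetic difference is mechanical: the paper identifies $\cH_\alpha=\text{span}\{\varphi^\alpha_l,\varphi^\alpha_r\}$ from the range of the hitting operator and then matches the normalizing constants at the endpoints, whereas you apply the projection directly to $u^\alpha_r$ and split the expectation over $X_\tau\in\{l,r\}$; these are equivalent, and your observation that $\{X_\tau=l,\,\tau<\zeta\}$ is null when $l\notin I_e$, respectively $u^\alpha_r(l)=u_-(l)/u_-(r)=0$ by Lemma~\ref{LM21}~(4) when $l\in I_e$, is sound.

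The genuine gap is in your $\alpha=0$ case split. You file $I_e=[l,r]$ among ``the transient subcases'' in which the projection identity $H_0u=u$ for $u\in\cH$ ``is again available.'' But when both endpoints are reflecting the form is never transient: $l,r$ regular forces $m(l+),m(r-)<\infty$, hence $m(I)<\infty$, so $1\in\sF$ with $\sE(1,1)=0$ and $(\sE,\sF)$ is recurrent. The zero-order projection theorem you invoke is precisely a transience statement (that is why the paper restricts \eqref{eq:41} with $\alpha=0$ to the transient case), so it is unavailable in the very subcase where the two-dimensional exit law $\mathbf{P}_x[X_\tau=r,\tau<\zeta]=u^0_r(x)$ has to be proved; and your fallback ``direct path argument'' is designed only for the subcases with a single reflecting endpoint (where, incidentally, the paper does not need a bespoke argument either: it simply cites Theorem 3.5.6(2) of Chen--Fukushima, recurrence implying $\mathbf{P}_x(\sigma_j<\infty)=1$ for points of positive capacity). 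The paper closes the $I_e=[l,r]$, $\alpha=0$ case by instead citing Theorem 3.4.8 of Chen--Fukushima, the recurrent-case substitute which yields $\varphi^0_l,\varphi^0_r\in\cH$, and then identifying them via Theorem~\ref{THM34}~(4) and the boundary values. Your gap is patchable without any projection identity --- since $u^0_r$ is explicit, optional stopping of the bounded local martingale $\bs(X_{t\wedge\tau})$ together with $\tau<\infty$ $\mathbf{P}_x$-a.s.\ (recurrence) gives the exit law directly --- but as written this case of your proof rests on a theorem whose hypothesis fails.
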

\begin{proof}
Set $\varphi^\alpha_j(\cdot):=\mathbf{E}_\cdot \left[\re^{-\alpha \tau}; X_\tau=j, \tau<\zeta \right]$ for $j=l$ or $r$.  
Suppose first that $\alpha>0$, or  that $\alpha=0$ and $(\sE,\sF)$ is transient.  Then
\begin{equation}\label{eq:41}
	\cH_\alpha=\left\{\mathbf{E}_\cdot \left[\re^{-\alpha \tau}f(X_\tau);\tau<\zeta \right]: f\in \sF\text{ for }\alpha>0\text{ and }f\in \sF_\re\text{ for }\alpha=0 \right\};
\end{equation}
see \cite[Theorems~3.2.2 and 3.4.2]{CF12}.  When $I_e=[l,r)$,  $X_\tau=l$ for $\tau<\zeta$.  Hence 
\[
	\cH_\alpha=\text{span}\{\varphi^\alpha_l\}.
\]
It follows from Theorems~\ref{THM31} and \ref{THM34} that $\varphi^\alpha_l=cu^\alpha_l$ for some constant $c$.  Note that $\varphi^\alpha_l(l)=1$ because $\mathbf{P}_l(\tau=0)=1$,  and $u^\alpha_l(l)=1$.  Therefore $c=1$.  Another case $I_e=(l,r]$ can be treated analogically and we can obtain that $u^\alpha_r=\varphi^\alpha_r$.  When $I_e=[l,r]$,  $\zeta=\infty$ and $X_\tau\in \{l,r\}$.  Then $\cH_\alpha=\text{span}\{\varphi^\alpha_l,  \varphi^\alpha_r\}$.  Meanwhile $\varphi^\alpha_l(l)=\varphi^\alpha_r(r)=1$ and $\varphi^\alpha_l(r)=\varphi^\alpha_r(l)=0$.  Therefore Theorem~\ref{THM31} yields $u^\alpha_l=\varphi^\alpha_l$ and $u^\alpha_r=\varphi^\alpha_r$.  

Finally consider $\alpha=0$ and $(\sE,\sF)$ is recurrent.  Particularly $\zeta=\infty$.  When $I_e=[l,r)$ or $I_e=(l,r]$,  it follows from \cite[Theorem~3.5.6~(2)]{CF12} that $\varphi^0_j(x)=\mathbf{P}_x(\sigma_j<\infty)=1$ for $j=l$ or $r$ and $x\in I_e$,  where $\sigma_j:=\inf\{t>0:X_t=j\}$. Hence $\varphi^0_j=u^0_j$.  When $I_e=[l,r]$,  \cite[Theorem~3.4.8]{CF12} implies that $\varphi^0_l,\varphi^0_r\in \cH$.  Since $\varphi^\alpha_l(l)=\varphi^\alpha_r(r)=1$ and $\varphi^\alpha_l(r)=\varphi^\alpha_r(l)=0$,  applying Theorem~\ref{THM34},  we can conclude that $u^0_l=\varphi^0_l,  u^0_r=\varphi^0_r$.  That completes the proof. 
\end{proof}

The special  case $\alpha=0$ leads to the characterization of the first hitting times of the endpoints $l$ and $r$.  

\begin{corollary}
Let $\sigma_j:=\inf\{t>0:X_t=j\}$ for $j=l$ or $r$.  Then the following hold:
\begin{itemize}
\item[(1)] When $I_e=[l,r)$,  $\mathbf{P}_x(\sigma_l<\infty)=1$ for any $x\in I_e$ if $\bs(r)=\infty$.  If $\bs(r)<\infty$,  then
\[
	\mathbf{P}_x(\sigma_l<\zeta)=\frac{\bs(r)-\bs(x)}{\bs(r)-\bs(l)},\quad x\in I_e. 
\]
\item[(2)] When $I_e=(l,r]$,  $\mathbf{P}_x(\sigma_r<\infty)=1$ for any $x\in I_e$ if $\bs(l)=-\infty$.  If $\bs(l)>-\infty$,  then
\[
	\mathbf{P}_x(\sigma_r<\zeta)=\frac{\bs(x)-\bs(l)}{\bs(r)-\bs(l)},\quad x\in I_e. 
\]
\item[(3)] When $I_e=[l,r]$,  
\[
	\mathbf{P}_x(\sigma_l<\sigma_r)=\frac{\bs(r)-\bs(x)}{\bs(r)-\bs(l)},\quad x\in I_e. 
\]
\end{itemize}
\end{corollary}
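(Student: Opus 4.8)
The plan is to obtain all three assertions by specializing the probabilistic representation of the preceding theorem to $\alpha=0$ and translating its exit events into hitting-time events. Setting $\alpha=0$ there gives $u^0_j(x)=\mathbf P_x(X_\tau=j,\,\tau<\zeta)$ for $j\in I_e$, since $\re^{-0\cdot\tau}=1$. Because $X$ has continuous sample paths and starts in $\mathring I$, the first exit time satisfies $\tau=\sigma_l\wedge\sigma_r$, and on $\{\tau<\infty\}$ one has $X_\tau=l$ exactly on $\{\sigma_l<\sigma_r\}$ and $X_\tau=r$ exactly on $\{\sigma_r<\sigma_l\}$, the tie $\{\sigma_l=\sigma_r<\infty\}$ being $\mathbf P_x$-null for a diffusion. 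Thus the whole corollary reduces to two things: determining the lifetime $\zeta$ in each configuration of $I_e$, and inserting the explicit form of $u^0_j$ furnished by Theorem~\ref{THM34}.

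I would then argue case by case. In case (1), $I_e=[l,r)$, the endpoint $l$ is reflecting, so no killing occurs at $l$, and $X$ has no killing inside $(l,r)$. If $\bs(r)=\infty$ then $r$ is entrance or natural, hence inaccessible, so $\sigma_r=\infty$ and $\zeta=\infty$; consequently $\{X_\tau=l,\,\tau<\zeta\}=\{\sigma_l<\infty\}$, and since $u^0_l\equiv 1$ here by convention the representation yields $\mathbf P_x(\sigma_l<\infty)=1$. If instead $\bs(r)<\infty$ then $r$ is exit or absorbing, so reaching $r$ kills the process and $\zeta=\sigma_r$ on $\{\sigma_r<\infty\}$; hence $\{X_\tau=l,\,\tau<\zeta\}=\{\sigma_l<\sigma_r\}=\{\sigma_l<\zeta\}$, and substituting $u^0_l(x)=(\bs(r)-\bs(x))/(\bs(r)-\bs(l))$ gives the stated formula. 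Case (2) with $I_e=(l,r]$ is entirely symmetric under interchanging $l\leftrightarrow r$. In case (3), $I_e=[l,r]$, both endpoints are reflecting, so $X$ is conservative, $\zeta=\infty$, and $X$ is recurrent, whence $\tau=\sigma_l\wedge\sigma_r<\infty$ almost surely; therefore $\{X_\tau=l,\,\tau<\zeta\}=\{\sigma_l<\sigma_r\}$ up to a null set, and $u^0_l(x)=(\bs(r)-\bs(x))/(\bs(r)-\bs(l))$ produces the claimed expression for $\mathbf P_x(\sigma_l<\sigma_r)$.

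The substitutions are immediate once the event identifications are in place, so the only real work, and the main obstacle, is the second reduction: pinning down $\zeta$ and the corresponding event identity in each case. The crucial points are that a non-reflecting approachable endpoint ($\bs(r)<\infty$, $r\notin I_e$) is a killing boundary at which $\zeta=\sigma_r$, that a non-approachable endpoint ($\bs(r)=\infty$) is never reached so that $\sigma_r=\infty$, and that two reflecting endpoints make the process conservative with $\zeta=\infty$. All of these follow from the boundary classification of the Definition together with the remark that $X$ has no killing inside $(l,r)$, and they also guarantee that $\tau<\zeta$ holds on the relevant hitting events, matching the strict inequality in the representation with the strict inequality $\sigma_j<\zeta$ in the statement; the only subtlety is that on $\{\sigma_r<\sigma_l\}$ one has $\zeta=\sigma_r<\sigma_l$, which disposes of that complementary event. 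With this resolved, the three formulas drop out of the preceding theorem and Theorem~\ref{THM34}.
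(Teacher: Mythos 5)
Your overall route is exactly the one the paper intends: the paper states this corollary without any written proof, as the immediate $\alpha=0$ specialization of the preceding representation theorem combined with the explicit formulas for $u^0_l,u^0_r$ from Theorem~\ref{THM34} and the convention $u^0_j:=1$ in the non-approachable cases. Your identifications of the events and of $\zeta$, and the substitutions, are the details the paper leaves implicit, and your conclusions in all three cases are correct.

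Two local justifications, however, are inaccurate as stated. First, the claim that $\bs(r)<\infty$ forces $r$ to be exit or absorbing is false: $r$ can be \emph{natural} with $\bs(r)<\infty$, namely when $m(r-)=\infty$ and $m((e,\xi])$ blows up fast enough that $\sigma(r)=\int_e^r m((e,\xi])\,d\bs(\xi)=\infty$ while $\bs(r)<\infty$ (e.g.\ $\bs(x)=x$, $m(d\xi)=(1-\xi)^{-2}d\xi$ near $r=1$). In that subcase $r$ is never reached, $\zeta=\infty$, and your assertion ``$\zeta=\sigma_r$ on $\{\sigma_r<\infty\}$'' is vacuous/wrong; the process may instead drift to $r$ as $t\to\infty$ without hitting $l$. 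Fortunately the identity you actually need, $\{X_\tau=l,\ \tau<\zeta\}=\{\sigma_l<\zeta\}$, still holds there (on $\{\sigma_l<\zeta\}$ one has $\tau=\sigma_l$ since $r\notin I$ cannot be hit; conversely $X_\tau=l$, $\tau<\zeta$ gives $\sigma_l=\tau<\zeta$), so the formula survives, but the argument should be phrased via this identity rather than via killing at $r$. Second, the blanket statement $\tau=\sigma_l\wedge\sigma_r$ fails whenever an endpoint lies outside $I$: then $\sigma_j=\infty$ because $X$ never takes the value $j$ in the state space, while $\tau$ may be finite and equal to $\zeta$ (exit or absorbing boundary). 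Again this is harmless because the theorem only involves the event $\{\tau<\zeta\}$, on which $X_\tau\in\{l,r\}$ and $X_\tau=j$ forces $\tau=\sigma_j$ for $j\in I_e$; but as written the reduction in your first paragraph is not literally correct for cases (1) and (2). With these two repairs the proof is complete and matches the paper's intended derivation.
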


\section{Harmonic functions in generator domain}

Denote by $\sL$ with domain $\cD(\sL)$ the $L^2$-generator of $(\sE,\sF)$.  In this section we answer the question that whether an $\alpha$-harmonic function belongs to $\cD(\sL)$ for $\alpha>0$.  Given a function $f\in \sF$,  $\frac{d}{dm}(df/d\bs)\in L^2(\mathring I,m)$ means that there exist a $d\bs$-a.e. version of $df/d\bs$,  still denoted by $df/d\bs$,  and $g\in L^2(\mathring I,m)$ such that
\begin{equation}\label{eq:51}
	\frac{df}{d\bs}(x)-\frac{df}{d\bs}(y)=\int_{(x,y]}g(\xi)m(d\xi),\quad l<x<y<r.  
\end{equation}
Meanwhile $\frac{d}{dm}(df/d\bs):=g$.  
Note that if $j\in I_e$ for $j=l$ or $r$,  then \eqref{eq:51} implies that $df/d\bs(j):=\lim_{x\rightarrow j}df/d\bs(x)$ exists and is finite.  
The description of $\sL$ is given as follows.  A related consideration is referred to \cite{F14}. 

\begin{lemma}
The $L^2$-generator of $(\sE,\sF)$ is
\[
	\begin{aligned}
		&\cD(\sL)=\bigg\{f\in \sF: \frac{d}{dm}\frac{df}{d\bs}\in L^2(\mathring I,m),  \frac{df}{d\bs}(j)=0\text{ if }j\in I_e,  \\
		&\qquad \qquad \qquad \qquad \qquad \qquad \qquad \qquad \qquad m(\{j\})=0\text{ for }j=l\text{ or }r \bigg\}, \\
		&\sL f=\left\lbrace
		\begin{aligned}
		&\frac{1}{2}\frac{d}{dm}\frac{df}{d\bs}(x),\quad\quad  \quad \quad\;\; x\in \mathring{I},  \\
		&\frac{1}{2}\frac{df}{d\bs}(l)/m(\{l\}),\quad\quad\quad \text{if }l\in I_e, \\
		&-\frac{1}{2}\frac{df}{d\bs}(r)/m(\{r\}),\quad\;\; \text{if }r\in I_e,
		\end{aligned} \right.
	\end{aligned}
\]
where we make the convention $0/0:=0$.  
\end{lemma}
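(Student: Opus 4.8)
The plan is to work from the weak characterization of the $L^2$-generator: $f\in\cD(\sL)$ with $\sL f=g$ exactly when $f\in\sF$, $g\in L^2(I,m)$ and $\sE(f,h)=-\int_I g\,h\,dm$ for every $h\in\sF$. Both $h\mapsto\sE(f,h)$ and $h\mapsto\int_I g\,h\,dm$ are continuous in the $\sE_1$-norm, so it suffices to verify the identity against the special standard core $\mathscr C_{I_e}$ of \eqref{eq:11} and then extend by density. The whole proof thus reduces to an integration-by-parts computation plus a careful reading of the boundary terms, and I would establish the two inclusions separately.

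For the \emph{sufficiency} I would fix $f$ in the displayed right-hand side and $h=\varphi\circ\bs\in\mathscr C_{I_e}$, and mimic the derivation of \eqref{eq:23} on $(l+\varepsilon,r-\varepsilon)$: using $d(df/d\bs)=\left(\frac{d}{dm}\frac{df}{d\bs}\right)dm$ from \eqref{eq:51} and integrating by parts gives
\[
\int_{l+\varepsilon}^{r-\varepsilon}\frac{df}{d\bs}\frac{dh}{d\bs}\,d\bs=\frac{df}{d\bs}\cdot h\,\bigg|_{l+\varepsilon}^{r-\varepsilon}-\int_{(l+\varepsilon,r-\varepsilon]}h\,\frac{d}{dm}\frac{df}{d\bs}\,dm.
\]
Letting $\varepsilon\downarrow0$, the interior integral tends to $2\int_{\mathring I}h\,\sL f\,dm$. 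Since $h$ vanishes in a neighbourhood of every endpoint not lying in $I_e$ (its support is compact in $\bs(I_e)$), only reflecting endpoints contribute a boundary term; at such a point $j$ the limit $\frac{df}{d\bs}(j)$ is finite by the note following \eqref{eq:51}, and $h(j)=\varphi(\bs(j))$ is finite. Matching this boundary term against the atomic part $\sL f(j)\,h(j)\,m(\{j\})$ of $\int_I h\,\sL f\,dm$ is precisely where the dichotomy appears: when $m(\{j\})=0$ the atom is absent and the condition $\frac{df}{d\bs}(j)=0$ makes the boundary term vanish, while when $m(\{j\})>0$ the prescribed value $\pm\frac12\frac{df}{d\bs}(j)/m(\{j\})$ of $\sL f(j)$ (with $+$ at $l$, $-$ at $r$) makes the two expressions agree. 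This yields $\sE(f,h)=-\int_I h\,\sL f\,dm$ on the core, and $\sL f\in L^2(I,m)$, so the $\sE_1$-density of $\mathscr C_{I_e}$ extends the identity to all $h\in\sF$.

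For the \emph{necessity} I would start from $f\in\cD(\sL)$, $\sL f=g$, and first test only against $h\in\mathscr C_{\mathring I}$ supported in a compact subinterval of $\mathring I$. The same integration by parts, now with vanishing boundary terms, together with one further integration by parts of $\int g\,h\,dm$ (writing $G:=\int_e^\cdot g\,dm$) shows that $\frac12\frac{df}{d\bs}-G$ is constant on $\mathring I$; hence \eqref{eq:51} holds with $\frac{d}{dm}\frac{df}{d\bs}=2g$ on $\mathring I$, so $\frac{d}{dm}\frac{df}{d\bs}\in L^2(\mathring I,m)$ and the finite limits $\frac{df}{d\bs}(j)$ at $j\in I_e$ exist. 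Testing next against $h\in\mathscr C_{I_e}$ nonzero at a reflecting endpoint $j$ and comparing the surviving boundary term with the atomic contribution $2g(j)h(j)m(\{j\})$ gives $\frac{df}{d\bs}(r)=-2g(r)m(\{r\})$ and $\frac{df}{d\bs}(l)=2g(l)m(\{l\})$; reading these as before produces the boundary condition $\frac{df}{d\bs}(j)=0$ when $m(\{j\})=0$ and the stated formula for $g(j)=\sL f(j)$ when $m(\{j\})>0$. Thus $f$ lies in the displayed domain and $\sL f$ has the asserted form.

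I expect the boundary-term bookkeeping to be the only delicate part. One must (i) guarantee that $\frac{df}{d\bs}(j)$ genuinely converges at reflecting endpoints, which is exactly what the note after \eqref{eq:51} supplies; (ii) keep the signs at $l$ and $r$ straight; and (iii) treat the atom-free case through the convention $0/0:=0$, observing that when $m(\{j\})=0$ the point $j$ is $m$-negligible, so the value of $\sL f$ there is immaterial and only the boundary condition survives. The interior identity and the $\sE_1$-continuity extension are routine.
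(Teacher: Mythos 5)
Your proposal is correct and takes essentially the same route as the paper: both reduce the lemma to the weak identity $\sE(f,h)=-\int_{I}\sL f\,h\,dm$ tested against the special standard core $\mathscr C_{I_e}$, identify $\tfrac12\,df/d\bs$ on $\mathring I$ with the indefinite $m$-integral of $\sL f$ by first using interior-supported test functions (your $G$ is the paper's $F_h$ up to the additive constant $\tfrac12\,df/d\bs(e)$), and then read the boundary conditions at reflecting endpoints off the surviving endpoint terms, exactly as in \eqref{eq:52}--\eqref{eq:53}. The only differences are presentational: the paper dispatches your sufficiency direction as a one-line verification, while you carry out the $\varepsilon$-truncated integration by parts explicitly.
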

\begin{proof}
Let $\mathscr C_{I_e}$  be the special standard core of $(\sE,\sF)$ defined as \eqref{eq:11}.  Denote by $\sG$ the right term in the first identity.  Clearly $\sL f$ given by the second identity is well defined for any $f\in \sG$.  
It is straightforward to verify that $-\int_{I_e}\sL f g dm=\sE(f,g)$ for any $f\in \sG$ and $g\in \mathscr C_{I_e}$.  Hence we only need to show $\cD(\sL)\subset \sG$.  To do this,  take $f\in \cD(\sL)$ with $h:=\sL f\in L^2(I_e,m)$.  With out loss of generality assume that $df/d\bs(e)$ is well defined and finite. Then $\sE(f,g)=-\int_{I_e} h gdm$ for any $g\in \mathscr C_{I_e}$.   Define
\[
	F_h(x):=\int_{(e,x]}h(\xi)m(d\xi)+\frac{1}{2}\frac{df}{d\bs}(e),  \quad x>e
\]
and 
\[
	F_h(x):=-\int_{(x,e]}h(\xi)m(d\xi)+\frac{1}{2}\frac{df}{d\bs}(e),  \quad x<e.
\]
Set $F_h(l-):=-\int_{[l,e]}h(\xi)m(d\xi)+\frac{1}{2}\frac{df}{d\bs}(e)$ if $l\in I_e$.  For $j=l$ or $r$,  if $j\notin I_e$,  then $g(j)=0$.  Hence we have
\[
	-\int_{I_e}h g dm=-\int_{[l,r]}gdF_h=\int_l^r F_h(x)dg(x)+F_h(l-)g(l)-F_h(r)g(r),
\]
where we impose $F_h(l-)\cdot 0=F_h(r)\cdot 0=0$ even if $F_h(l-)$ or $F_h(r)$ is not defined.  It follows from $\sE(f,g)=-\int_{I_e}hgdm$ that
\begin{equation}\label{eq:52}
	\frac{1}{2}\int_l^r \frac{df}{d\bs}(x)dg(x)=\int_l^r F_h(x)dg(x)+F_h(l-)g(l)-F_h(r)g(r),\quad \forall g\in \mathscr C_{I_e}.  
\end{equation}
Taking all $g\in \mathscr C_{I_e}$ with $\text{supp}[g]\subset (l,r)$ and noticing $F_h(e)=\frac{1}{2}df/d\bs(e)$,  we can obtain that
\begin{equation}\label{eq:53}
	F_h(x)=\frac{1}{2}\frac{df}{d\bs}(x),\quad x\in (l,r). 
\end{equation}
Particularly $\frac{1}{2}\frac{d}{dm}\frac{df}{d\bs}=h\in L^2(\mathring{I},m)$.  In addition, \eqref{eq:52} and \eqref{eq:53} yield that $F_h(l-)=0$ if $l\in I_e$ and $F_h(r)=0$ if $r\in I_e$.  In the former case 
\[
	F_h(l-)=F_h(l)-h(l)m(\{l\})=\frac{1}{2}\frac{df}{d\bs}(l)-h(l)m(\{l\}).  
\]
Hence $df/d\bs(l)=0$ if $m(\{l\})=0$ and $h(l)=\frac{1}{2}\frac{df}{d\bs}(l)/m(\{l\})$.  The latter case can be treated analogically.  Therefore we can obtain $f\in \sG$.  That completes the proof. 
\end{proof}

Set $C:=\int_l^r u(y)^{-2}d\bs(y)$,  which is finite due to Lemma~\ref{LM21}~(1).  It is straightforward to calculate that if $l\in I_e$,  then
\[
\begin{aligned}
	&u_-(l)=0,\quad u_+(l)=Cu(l)\in (0,\infty), \\
	&\frac{du_-}{d\bs}(l)=\frac{1}{u(l)}\in (0,\infty),  \quad \frac{du_+}{d\bs}(l)=C\frac{du}{d\bs}(l)-\frac{1}{u(l)}\in (-\infty,0); 
\end{aligned}\]
and if $r\in I_e$,  then
\[
\begin{aligned}
&u_-(r)=Cu(r)\in (0,\infty),\quad u_+(r)=0, \\
&\frac{du_-}{d\bs}(r)=C\frac{du}{d\bs}(r)+\frac{1}{u(r)}\in (0,\infty),\quad \frac{du_+}{d\bs}(r)=-\frac{1}{u(r)}\in (-\infty,0).  
\end{aligned}
\]
Put $c_l:=-\frac{du_+}{d\bs}(l)/\frac{du_-}{d\bs}(l)$ and $c_r:=-\frac{du_+}{d\bs}(r)/\frac{du_-}{d\bs}(r)$.  Clearly $c_l, c_r\in (0,\infty)$. 

\begin{theorem}
\begin{itemize}
\item[(1)] When $I_e=[l,r)$,  $\cH_\alpha\cap \cD(\sL)=\{0\}$ if $m(\{l\})=0$ and $\cH_\alpha\cap \cD(\sL)=\cH_\alpha$ if $m(\{l\})>0$.
\item[(2)] When $I_e=(l,r]$,  $\cH_\alpha\cap \cD(\sL)=\{0\}$ if $m(\{r\})=0$ and  $\cH_\alpha\cap \cD(\sL)=\cH_\alpha$ if $m(\{r\})>0$.
\item[(3)] When $I_e=[l,r]$,  
\[
	\cH_\alpha \cap \cD(\sL)=\left\lbrace
	\begin{aligned}
		&\cH_\alpha,\qquad\qquad \qquad\quad m(\{l\}), m(\{r\})>0;  \\
		&\text{span}\{u_++c_r u_-\},\quad m(\{l\})>0,m(\{r\})=0;  \\
		&\text{span}\{u_++c_l u_-\},\quad\, m(\{r\})>0,m(\{l\})=0;  \\
		&\{0\},  \qquad\qquad \qquad\quad m(\{l\})=m(\{r\})=0. 
	\end{aligned}	
	 \right.
\]
\end{itemize}
\end{theorem}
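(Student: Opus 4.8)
The plan is to reduce membership in $\cD(\sL)$ to a pair of boundary conditions and then to read those conditions off from the explicit generators of $\cH_\alpha$ supplied by Theorem~\ref{THM31}. The starting point is that any $f\in\cH_\alpha$ is a solution of \eqref{eq:21}, so $\frac{d}{dm}\frac{df}{d\bs}=2\alpha f$ holds in the sense of \eqref{eq:51}; since $f\in\sF\subset L^2(\mathring I,m)$, this already gives $\frac{d}{dm}\frac{df}{d\bs}\in L^2(\mathring I,m)$ for free. Consulting the description of $\sL$ in the preceding lemma, I conclude that for $f\in\cH_\alpha$ one has $f\in\cD(\sL)$ if and only if $\frac{df}{d\bs}(j)=0$ at every endpoint $j\in I_e$ with $m(\{j\})=0$. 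Thus the whole problem becomes the evaluation of $\frac{df}{d\bs}$ at the reflecting endpoints.

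For parts (1) and (2) only one endpoint lies in $I_e$. In case (1), $I_e=[l,r)$ and $\cH_\alpha=\text{span}\{u^\alpha_l\}$, a scalar multiple of $u_+$; the single relevant condition sits at $l$, and the computation preceding the theorem gives $\frac{du_+}{d\bs}(l)\in(-\infty,0)$, hence $\frac{du^\alpha_l}{d\bs}(l)\neq0$. Consequently, if $m(\{l\})=0$ the condition is violated and $\cH_\alpha\cap\cD(\sL)=\{0\}$, while if $m(\{l\})>0$ no condition is imposed and $\cH_\alpha\cap\cD(\sL)=\cH_\alpha$. Case (2) is entirely symmetric, now using $\frac{du_-}{d\bs}(r)\in(0,\infty)$.

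For part (3), $I_e=[l,r]$ and $\cH_\alpha=\text{span}\{u_+,u_-\}$. Writing a general element as $h=au_++bu_-$ and invoking the definitions $c_l=-\frac{du_+}{d\bs}(l)/\frac{du_-}{d\bs}(l)$, $c_r=-\frac{du_+}{d\bs}(r)/\frac{du_-}{d\bs}(r)$ together with $\frac{du_-}{d\bs}(l),\frac{du_-}{d\bs}(r)\neq0$, I obtain the equivalences $\frac{dh}{d\bs}(l)=0\Leftrightarrow b=ac_l$ and $\frac{dh}{d\bs}(r)=0\Leftrightarrow b=ac_r$. Imposing exactly those conditions dictated by the vanishing of $m(\{l\})$ and $m(\{r\})$ yields the four subcases directly: no condition returns $\cH_\alpha$; a single condition $b=ac_r$ (resp. $b=ac_l$) returns $\text{span}\{u_++c_ru_-\}$ (resp. $\text{span}\{u_++c_lu_-\}$); and both conditions together force $ac_l=b=ac_r$.

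The crux — and the one genuinely nontrivial point — is the last subcase $m(\{l\})=m(\{r\})=0$, where I must show that $ac_l=ac_r$ forces $a=b=0$, i.e. that $c_l\neq c_r$. I would prove this directly from Lemma~\ref{LM21}. Using the boundary values listed before the theorem one computes $c_l=1-Cu(l)\frac{du}{d\bs}(l)$ and $c_r=(1+Cu(r)\frac{du}{d\bs}(r))^{-1}$; since $\frac{du}{d\bs}$ is increasing with $\frac{du}{d\bs}(e)=0$, one has $\frac{du}{d\bs}(l)\le0\le\frac{du}{d\bs}(r)$, whence $c_l\ge1\ge c_r$, with equality in both places only if $\frac{du}{d\bs}(l)=\frac{du}{d\bs}(r)=0$, forcing $\frac{du}{d\bs}\equiv0$ on $[l,r]$ and contradicting $\frac{d}{dm}\frac{du}{d\bs}=2\alpha u\ge2\alpha>0$. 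Alternatively, and perhaps more transparently, if $c_l=c_r=:c$ then $w:=u_++cu_-$ is a positive solution of \eqref{eq:21} with $\frac{dw}{d\bs}(l)=\frac{dw}{d\bs}(r)=0$; letting $x\downarrow l$ and $y\uparrow r$ in the identity \eqref{eq:22} for $w$ gives $2\alpha\int_{(l,r]}w\,dm=\frac{dw}{d\bs}(r)-\frac{dw}{d\bs}(l)=0$, which is impossible since $w>0$ on $\mathring I$ and $m$ has full support. Either route gives $c_l\neq c_r$, completing the enumeration.
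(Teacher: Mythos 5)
Your proposal is correct and takes essentially the same route as the paper: both reduce membership of an $\alpha$-harmonic $h$ in $\cD(\sL)$ to the Neumann conditions $\frac{dh}{d\bs}(j)=0$ at each reflecting endpoint $j$ with $m(\{j\})=0$ (via the lemma describing $\cD(\sL)$, using that $\frac{d}{dm}\frac{dh}{d\bs}=2\alpha h\in L^2$ is automatic), and then solve the resulting linear system on $\mathrm{span}\{u_+,u_-\}$. The only divergence is the final nondegeneracy step, where the paper expands the $2\times 2$ determinant and checks positivity from $\frac{du}{d\bs}(l)<0<\frac{du}{d\bs}(r)$, whereas you prove the equivalent fact $c_l\neq c_r$ (note the determinant equals $(c_l-c_r)\frac{du_-}{d\bs}(l)\frac{du_-}{d\bs}(r)$), either from the explicit formulas for $c_l,c_r$ or by integrating \eqref{eq:22} against the would-be solution $w=u_++cu_-$ — all of these rest on the same sign facts, so this is a cosmetic rather than substantive difference.
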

\begin{proof}
For the first and second assertions,  it suffices to note that $du_+/d\bs(l)\neq 0$ if $l\in I_e$ and $du_-/d\bs(r)\neq 0$ if $r\in I_e$.  Now consider $I_e=[l,r]$.  When $m(\{l\}), m(\{r\})>0$,  clearly $\cH_\alpha\subset \cD(\sL)$.  When $m(\{l\})>0$ and $m(\{r\})=0$,  $h=c_1 u_++c_2 u_-\in  \cD(\sL)$ if and only if 
\[
c_1\frac{du_+}{d\bs}(r)+c_2\frac{du_-}{d\bs}(r)=0.  
\]
This amounts to $c_2/c_1=c_r$.  Hence $\cH_\alpha\cap \cD(\sL)=\text{span}\{u_++c_ru_-\}$.  The third case can be treated analogically.   When $m(\{l\})=m(\{r\})=0$,  $h=c_1 u_++c_2 u_-\in  \cD(\sL)$ if and only if 
\[
\begin{aligned}
&c_1\frac{du_+}{d\bs}(r)+c_2\frac{du_-}{d\bs}(r)=0, \\
&c_1\frac{du_+}{d\bs}(l)+c_2\frac{du_-}{d\bs}(l)=0.  
\end{aligned}\]
Note that $du/d\bs(l)<0,  du/d\bs(r)>0$ and $u(l), u(r)>0$.  
Then we have $c_1=c_2=0$ because 
\[
	\frac{du_+}{d\bs}(r)\frac{du_-}{d\bs}(l)-\frac{du_-}{d\bs}(r)\frac{du_+}{d\bs}(l)=-C^2\frac{du}{d\bs}(l)\frac{du}{d\bs}(r)-\frac{C}{u(r)}\frac{du}{d\bs}(l)+\frac{C}{u(l)}\frac{du}{d\bs}(r)>0.  
\]
That completes the proof. 
\end{proof}

\bibliographystyle{abbrv}
\bibliography{StructureOC}

\begin{thebibliography}{1}

\bibitem{CF12}
Z.-Q. Chen and M.~Fukushima.
\newblock {\em {Symmetric Markov processes, time change, and boundary theory}},
  volume~35 of {\em London Mathematical Society Monographs Series}.
\newblock Princeton University Press, Princeton, NJ, 2012.

\bibitem{F14}
M.~Fukushima.
\newblock {On general boundary conditions for one-dimensional diffusions with
  symmetry}.
\newblock {\em J. Math. Soc. Japan}, 66(1):289--316, 2014.

\bibitem{I06}
K.~It{\^o}.
\newblock {\em {Essentials of stochastic processes}}, volume 231 of {\em
  Translations of Mathematical Monographs}.
\newblock American Mathematical Society, Providence, RI, 2006.

\bibitem{LY19}
L.~Li and J.~Ying.
\newblock {On symmetric linear diffusions}.
\newblock {\em Trans. Amer. Math. Soc.}, 371(8):5841--5874, 2019.

\bibitem{M68}
P.~Mandl.
\newblock {\em {Analytical treatment of one-dimensional Markov processes}}.
\newblock Die Grundlehren der mathematischen Wissenschaften, Band 151. Academia
  Publishing House of the Czechoslovak Academy of Sciences, Prague;
  Springer-Verlag New York Inc., New York, 1968.

\bibitem{RW87}
L.~C.~G. Rogers and D.~Williams.
\newblock {\em {Diffusions, Markov processes, and martingales. Vol. 2}}.
\newblock Wiley Series in Probability and Mathematical Statistics: Probability
  and Mathematical Statistics. John Wiley {\&} Sons, Inc., New York, 1987.

\end{thebibliography}

\end{document}